\renewcommand\nomgroup[1]{%
	\item[\bfseries
	\ifstrequal{#1}{M}{Matrices}{%
		\ifstrequal{#1}{V}{Vectors}{%
			\ifstrequal{#1}{Sets}{Sets}{}}}%
	]}
\def\BState{\State\hskip-\ALG@thistlm}
\colorlet{darkred}{red!80!black}
\colorlet{darkgreen}{green!60!black}
\colorlet{darkblue}{blue!80!black}
\colorlet{darkorange}{orange!70!black}
\definecolor{Green}{cmyk}{1, 0.2, 0.4, 0.1}
\definecolor{Orange}{cmyk}{0, 0.61, 0.87, 0}
\definecolor{Purple}{rgb}{0.75, 0.0, 1.0}
\definecolor{purple}{rgb}{0.5,0,0.5}
\definecolor{dgreen}{rgb}{0.1,0.9,0.5}
\definecolor{gabysgreen}{cmyk}{0.80, 0.1, 0.90, 0}
\newcommand{\red}[1]{{\color{black}#1}}
\newcommand{\blue}[1]{{\color{black}#1}}
\newcommand{\irg}[2]{[#1\!:\!#2]}
\newcommand{\R}{\mathbb{R}}						
\newcommand{\N}{\mathbb{N}}						
\newcommand{\COP}{\mathcal{COP}}				
\newcommand{\CPP}{\mathcal{CPP}}				
\newcommand{\x}{\vc{x}}							
\renewcommand{\u}{\vc{u}}						
\newcommand{\lrbr}[1]{\left\lbrace #1 \right\rbrace} 
\newcommand\ga{\alpha}
\newcommand\gd{\delta}
\newcommand\gl{\lambda}
\newcommand\ggl{\bm{\lambda}}
\def\x{\mathbf x}
\def\a{\mathbf a}
\def\y{\mathbf y}
\def\z{\mathbf z}
\def\v{\mathbf v}
\def\w{\mathbf w}
\def\oo{\mathbf o}
\def\e{\mathbf e}
\def\u{\mathbf u}
\def\b{\mathbf b}
\def\c{\mathbf c}
\def\d{\mathbf d}
\def\q{{\mathbf q}}
\def\AA{{\mathcal A}}
\newcommand\BB{{\mathcal B}}
\newcommand\CC{{\mathcal C}}
\newcommand\DD{{\mathcal D}}
\newcommand\FF{{\mathcal F}}
\newcommand\GG{{\mathcal G}}
\newcommand\KK{{\mathcal K}}
\newcommand\LL{{\mathcal L}}
\newcommand\NN{{\mathcal N}}
\def\SS{{\mathcal S}}
\newcommand\VV{{\mathcal V}}
\newcommand\Ab{{\mathsf{A}}}
\newcommand\Bb{{\mathsf B}}
\newcommand\Cb{{\mathsf C}}
\newcommand\Eb{{\mathsf E}}
\newcommand\Mb{{\mathsf M}}
\newcommand\Nb{{\mathsf N}}
\newcommand\Ob{{\mathsf O}}
\newcommand\Pb{{\mathsf P}}
\newcommand\Qb{{\mathsf Q}}
\newcommand\Sb{{\mathsf S}}
\newcommand\Wb{{\mathsf W}}
\newcommand\Xb{{\mathsf X}}
\newcommand\Yb{{\mathsf Y}}
\newcommand\Zb{{\mathsf Z}}
\newcommand{\tr}[1]{\mathrm{Tr}(#1)}								
\newcommand{\T}{^\mathsf{T}}	
\DeclareMathOperator{\conv}{conv}									
\DeclareMathOperator{\diag}{diag}									
\def\beq#1{$$}														
\DeclareMathOperator{\epi}{epi}										
\DeclareMathOperator{\interior}{int} 								
\DeclareMathOperator{\relint}{relint} 					 			
\DeclareMathOperator{\cl}{cl} 									    
\DeclareMathOperator{\dom}{dom}										
\def\ol#1{{\overline #1}}
\def\eps{\varepsilon}
\def\ol#1{{\overline #1}}
\def\bea#1{\begin{array}{#1}}
	\def\ea{\end{array}}
\def\ignore#1{}
\theoremstyle{plain}
\newtheorem{thm}{Theorem} 
\newtheorem{exmp}{Example} 
\newtheorem{asm}{Assumption}
\newtheorem{prop}[thm]{Proposition}
\newtheorem*{rem}{Remark}
\theoremstyle{definition}
\theoremstyle{remark}
\begin{document}
	\renewcommand\Authands{ and }
	\title{Finding quadratic underestimators for optimal\\ value functions of nonconvex all-quadratic problems\\ via copositive optimization}
	
	\author{Markus Gabl\footnote{corresponding author}  
		\footnote{Department of Mathematics, University of Vienna, Austria; email: {\tt markus@gabl.com}}
		\Authands  
		Immanuel M.~Bomze\footnote{VCOR and Data Science@Uni Vienna, University of Vienna, Austria; email:  {\tt  immanuel.bomze@univie.ac.at}} 
	}
	\maketitle
	\begin{abstract}
		Modeling parts of an optimization problem as an optimal value function that depends on a top-level decision variable is a regular occurrence in optimization and an essential ingredient for methods such as Benders Decomposition. It often allows for the disentanglement of computational complexity and exploitation of special structures in the lower-level problem that define the optimal value functions. If this problem is convex, duality theory can be used to build piecewise affine models of the optimal value function over which the top-level problem can be optimized efficiently. In this text, we are interested in the optimal value function of an all-quadratic problem (also called quadratically constrained quadratic problem, QCQP) which is not necessarily convex, so that duality theory can not be applied without introducing a generally unquantifiable relaxation error. This issue can be bypassed by employing copositive reformulations of the underlying QCQP. We investigate two ways to parametrize these by the top-level variable. The first one leads to a copositive characterization of an underestimator that is sandwiched between the convex envelope of the optimal value function and that envelope's lower-semicontinuous hull. The dual of that characterization allows us to derive affine underestimators. The second parametrization yields an alternative characterization of the optimal value function itself, which other than the original version has an exact dual counterpart. From the latter, we can derive convex and nonconvex quadratic underestimators of the optimal value function. In fact, we can show that any quadratic underestimator is associated with a dual feasible solution in a certain sense. 
	\end{abstract}
	{\bf Keywords:} Quadratic Optimization, Conic Optimization, Benders Decomposition
	\newpage


	\section{Introduction}
	
	
	Many areas of optimization are dealing with problems of the following form
	\begin{align}\label{eqn:GeneralizedTwoStage}
		\inf_{\x\in\R^{n_x}}
		\lrbr{
			f_0(\x)+\phi(\x)
			\colon 
			\x\in\FF_0	}\quad\mbox{ with }
		\FF_0\subseteq \R^{n_x}\, , 
	\end{align}
	where the function $\phi$ is given by an inner optimization problem:  
	\begin{align}\label{eqn:Recourse}
		\hspace{-0.5cm}
		\phi(\x)
		\coloneqq 
		\inf_{\y\in\R^{n_y}}
		\lrbr{
			\red{q}(\y,\x) 
			\colon
			\y\in\FF(\x)\,
		}\quad\mbox{with}\quad
		\FF(\x)\coloneqq\lrbr{\y\in\R^{n_y}\colon [\x\T,\y\T]\T\in\FF}
	\end{align} 
	where $\FF\subseteq\R^{n_x+n_y}$ \red{and $q$ is a quadratic function}. Modeling optimization problems in this particular manner often arise from the desire to disentangle sources of computational complexity. For example, it might be hard to optimize over $\x$ and $\y$ simultaneously, while~\eqref{eqn:Recourse} may be easy to solve for a given value for $\x$, or this formulation may exhibit other exploitable structure. 
 
 Classical examples for such a constellation are given by two-stage stochastic optimization, robust optimization, some instances of large-scale optimization, bilevel optimization, and many more, for example applications related to Benders decomposition~\cite{rahmaniani_benders_2017} which aim to build an approximate model of $\phi$. This is done via information gathered from evaluating $\phi$ systematically at different points, used to approximately solve~\eqref{eqn:GeneralizedTwoStage}, mostly in an iterative manner based on said model, which is improved in every iteration.  
	
	If the optimization model defining $\phi$ is convex, one can use duality theory to derive affine underestimators at any given point $\x$ from the optimal dual objective. However, building approximations for $\phi$ is usually much more challenging in the nonconvex case. The same is true for building nonlinear underestimators even in the convex regime. In this text, we will demonstrate that via copositive optimization theory, we can find affine and (convex and nonconvex) quadratic underestimators of $\phi$ if it is the optimal value function of a quadratically constrained, quadratic optimization problem (QCQP). 
	
	Hence, we focus on the case where the function $f$ is a, possibly \red{(separately in $\x$ and $\y$ or also jointly in $(\x,\y)$)} nonconvex, quadratic function in the variables $[\x\T,\y\T]\T \in\R^{n_x+n_y}$ and where the set-valued mapping $$\FF(\x)\coloneqq\lrbr{\y\in\R^{n_y}\colon [\x\T,\y\T]\T\in\FF}$$  as well as the set $\FF$ are not necessarily convex, possibly even disconnected, but of sufficient simplicity in a certain sense which we will elaborate later (see Assumptions \ref{asm:CharacterizeG} and \ref{asm:CompactFs}). Under these restrictions, we set out to achieve the following:
	\begin{itemize}
		\item We show how to find affine underestimators of such a $\phi$, which are derived from a copositive characterization of a certain convex underestimator $\widehat{\phi}$ of $\phi$. This new function $\widehat{\phi}$ is constructed from a suitable parameterization of the copositive reformulation of the underlying QCQP and it is shown to be also an overestimator of the closure of the convex envelope of $\phi$. Some important properties of $\widehat{\phi}$ are studied as well. 
		
		\item Switching to an alternative parameterization of said copositive reformulation, we propose a convexified characterization of $\phi$ from which we can derive convex and nonconvex quadratic underestimators. We will show that, in a certain sense, any quadratic underestimator of $\phi$ can be derived in this manner.  
		
		\item An attractive feature of our analysis is the fact that once the hurdle of finding a copositive reformulation of the underlying QCQP has been taken, the derivations involve merely dualizing an appropriate parametrization of the reformulations optimal value function in much the same way it is traditionally done in case when $\phi$ were convex, to begin with. Thus, copositive optimization theory nicely interfaces between convex and nonconvex optimization.
  
    \item \blue{Finally, we exemplify our theory in numerical experiments where we develop a Benders type method to tackle a certain nonconvex QCQP. Our experiments show that computational advantages are possible and merit further investigation in future research.}
    
	\end{itemize}

 \blue{
	\subsection{Relation to prior work}
	Our analysis will mostly rely on copositive optimization theory which has been used in the past to derive convex envelopes of quadratic functions over certain feasible sets, see \cite{anstreicher_convex_2012,locatelli_convex_2014,yildirim_alternative_2022} who characterized convex envelopes of quadratic functions over polyhedral sets. The present text can be seen as a generalization of their work, where we consider convex envelopes of optimal value functions over (at least in theory) arbitrary sets. 
	
	Optimal value functions of QCQPs have played a role in robust optimization as well as in semi-infinite optimization.  Robust optimization where the uncertain function quadratically depends on the uncertainty parameter has been studied in \cite{bomze_interplay_2021,fan_decision_2024,gokalp_robust_2017,xu_improved_2023}, where copositive optimization techniques have been used to derive conservative approximations and Benders-type solution schemes.  
	
	Similarly, techniques resembling copositive optimization have been used in order to tackle the implied optimal value functions in semi-infinite programming. For example \cite{cerulli_convergent_2022,oustry_convex_2023} used SDP-based relaxations of such optimal value functions in order to derive for linear  Benders-type cuts mentioned above.  
	
	These approaches deal with cases where the optimal value functions have feasible sets that are independent of $\x$, and they make some strong assumptions on the structure of the feasible set. In addition, they derive convex approximations and, in as much as they are concerned with cuts, they derive linear ones. Our analysis allows, at least in principle, for arbitrary feasible sets which may depend on $\x$. In addition, we also consider convex and non-convex quadratic cuts. Thus, our analysis unifies and extends prior work in this area. 

 }
	
	\section{Preliminaries, definitions, and notation}
	Throughout the text, matrices are denoted with sans-serif capitals (or normal-face capitals in case they are $1\times 1$), 
	and vectors are given as boldface lowercase letters. For instance, $\Ob$ and $\Eb$ are the matrices of all zeros and ones respectively while the vector of zeros/ones is $\oo$/$\e$ and the order will be clear from the context. $\mathrm{Diag}(\Sb)$ extracts the diagonal of a square $n\times n$ matrix $\Sb$ as a vector in $\R^n$, the $n$-dimensional Euclidean space with its positive orthant $\R^n_+$.
	
	Sets will mostly be referred to using calligraphic letters, and so will some particular mappings, but the context will clarify. For example: $\SS^n$ is the space of symmetric $n\times n$ matrices, $\SS^n_+$ will be the cone of positive semidefinite matrices. On that note, $\Ab\bullet\Bb = \tr{\Ab\T\Bb}$ is the Frobenius product, an inner product for both $\R^{n\times m}$ and $\SS^n$, $\|\Ab\|_F := \sqrt{\Ab\bullet\Ab}$ being the associated Frobenius norm. We abbreviate the index set $\irg{i}{j} := \lrbr{i,i+1,\dots,j-1,j}$, where $i<j$ are integer numbers and finally the standard simplex $\Delta_n\coloneqq \lrbr{\x\in\R^n_+\colon \e\T\x=1}$. For a set $\FF\subseteq\R^n$ we denote by $\cl\FF,\ \conv\FF,\ \red{\mathrm{cone}\FF}$ and $\relint\FF$ its closure, convex hull, \red{conic hull}, and relative interior as defined in \cite{rockafellar_convex_2015}.
	
	Similarly, for a convex function $f$, we denote its closure by $\cl f$, which is defined as the largest lower semi-continuous function majorized by $f$, if $f$ is proper (i.e. $f>-\infty$ everywhere and not constantly equal $+\infty$) and as the constant function $-\infty$, in case $f$ is improper. 
	We also denote the convex envelope by $\conv f$, which is the largest convex function majorized by $f$. Its closure, $\cl\conv f$, will be referred to as the closed convex envelope for convenience. 
	Further,
	$\dom f:= \lrbr{\x\in\R^n\colon f(\x)<\infty}$ is its effective domain. As an example, 
	we have
	\begin{equation}\label{domfi}     
		\dom\phi = \lrbr{\x\in\R^{n_x}\colon \mbox{there exists }\y\in\R^{n_y}\colon [\x\T,\y\T]\T\in\FF}\, . \end{equation}
	Finally, the adjoint of a linear function $f$ will be denoted via $f^*$. 
	
	\blue{Typically, the literature is focused on convex envelopes rather than {\em closed} convex envelopes. However, this additional regularization allows us to derive very general results by comparatively simple techniques. Since this object may be less familiar to some readers,  we add the following proposition, which states that the closed convex envelope relates to the original function in a similar manner as the convex envelope. }
	\begin{prop}\label{prop:clconv}
		Let $f\colon \R^n\rightarrow\R\cup\lrbr{-\infty,\infty}$ be an arbitrary extended real-valued function. Then $\cl\conv f$ is the largest closed convex function majorized by $f$.
	\end{prop}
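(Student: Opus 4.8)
The plan is to verify the three defining properties of a largest element directly from the maximality of the convex envelope together with the elementary calculus of the closure operation, avoiding any appeal to conjugacy (which would break down precisely in the improper cases the proposition has to cover). Write $g:=\cl\conv f$. First I would record that $g$ is a closed convex function majorized by $f$: since $\conv f$ is convex, its closure $g$ is by construction closed and convex, and the closure of a convex function never exceeds that function, so $g\le\conv f\le f$, the last inequality holding because $\conv f$ is \emph{defined} as the largest convex minorant of $f$.

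The substance of the statement is maximality. Here I would take an arbitrary closed convex function $h$ with $h\le f$. Being convex and majorized by $f$, it is in particular majorized by the largest convex minorant of $f$, so $h\le\conv f$. I then pass to closures: the closure operation is monotone on convex functions, and a closed convex function is its own closure, whence $h=\cl h\le\cl\conv f=g$. This shows that every closed convex minorant of $f$ lies below $g$, which combined with the previous paragraph identifies $g$ as the largest closed convex function majorized by $f$.

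The one delicate point—and the part I expect to be the main obstacle—is making the two facts about the closure used above (monotonicity, and $\cl h=h$ for closed $h$) fully rigorous in the improper regime, since the definition of $\cl$ for a convex function branches on properness. In the generic case where $\conv f$ is proper, $g$ is literally the largest lower-semicontinuous minorant of $\conv f$, so both facts are transparent and the argument of the second paragraph goes through verbatim. For the degenerate cases I would argue separately: if $\conv f$ attains the value $-\infty$ at some point $\x_0$, then any closed convex $h\le f$ satisfies $h(\x_0)=-\infty$, and since a closed improper convex function is necessarily the constant $-\infty$ (a standard fact from \cite{rockafellar_convex_2015}), we get $h\equiv-\infty\le g$; and if $\conv f\equiv+\infty$, then $f\equiv+\infty$ and both $g$ and the prospective maximizer reduce to the constant $+\infty$. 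Assembling the proper case with these two boundary cases completes the proof.
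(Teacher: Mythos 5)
Your proof is correct, but it takes a genuinely different and more direct route than the paper's. The paper, after splitting off the improper case much as you do, introduces the auxiliary function $h(\x)\coloneqq\sup\lrbr{g(\x)\colon g \mbox{ a closed convex underestimator of } f}$, observes that this pointwise supremum is itself closed and convex, and then sandwiches it against $\cl\conv f$ via the epigraph identity $\epi \cl g=\cl\epi g$ from \cite[Theorem 7.1]{rockafellar_convex_2015}, applied twice. You bypass both the supremum construction and the epigraph manipulation: you take a \emph{single} arbitrary closed convex minorant $h\le f$, push it below $\conv f$ by maximality of the convex envelope, and conclude $h=\cl h\le \cl\conv f$ from monotonicity of the closure --- which in the proper case is immediate, since $\cl\conv f$ is by definition the largest lower semicontinuous minorant of $\conv f$ and a closed proper convex $h$ is lsc. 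Your improper branches are also sound: the fact that a closed improper convex function attaining $-\infty$ must be identically $-\infty$ does the same work as the paper's appeal to the closure convention. What the paper's construction buys in exchange for its extra machinery is the incidental fact that the supremum of all closed convex minorants is itself closed and convex; for the proposition as stated, your domination argument suffices and is leaner. One point actually in your favor: in the subcase $\conv f\equiv+\infty$ you implicitly follow Rockafellar's convention, under which the closure of a function nowhere equal to $-\infty$ is its lsc hull (so $\cl(+\infty)=+\infty$), the constant $-\infty$ being reserved for functions attaining $-\infty$ somewhere. The paper's stated definition, which assigns the closure $-\infty$ to \emph{every} improper function, would make the proposition fail in this vacuous edge case, so your careful branching here patches a small slip in the paper's own setup.
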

	\begin{proof}
		Let us first consider the case where $\conv f$ is improper. Any closed convex underestimator of $f$ is also majorized by the largest convex underestimator $\conv f$, so  $\cl\conv f$  must then equal $-\infty$ everywhere. Otherwise, if $\conv f$ is proper, we have $\dom f\neq \emptyset$ (otherwise $f\equiv +\infty$ and so would be $\conv f$, rendering it improper in contradiction to our assumption). For any $\x\in \dom f$ and any underestimator $g$ of $f$ we therefore have $g(\x)\le f(\x)< +\infty$, and $\dom g \neq \emptyset$ as well. Now
		define 
		$$h(\x)\coloneqq \sup\lrbr{g(\x) \colon g \mbox{ is a closed convex underestimator of } f}\, .$$ 
		By definition, $h$ majorizes all closed convex underestimators of $f$ including $\cl\conv f$, hence $h\geq \cl \conv f$, which also implies that $h$ is proper as $\dom h\supseteq \dom f\neq \emptyset$. For the converse, we first observe by elementary arguments that $h$, as a pointwise supremum of closed and convex functions, is itself closed and convex. 
		By definition of the convex envelope we have $\conv f\geq h$, so that $\epi h\supseteq \epi \conv f$ implying that 
		$$
		\epi h = \epi \cl h = \cl\epi h\supseteq \cl\epi \conv f = \epi \cl\conv f\, ,
		$$
		so that $\cl\conv f\geq h$. In the above equalities, we twice used the fact that whenever $g$ is proper, for the epigraphs we have $\epi \cl g$ = $\cl \epi g$, see for instance \cite[Theorem 7.1]{rockafellar_convex_2015} and the discussion succeeding that theorem.
	\end{proof}
        \blue{
	\begin{exmp}
		In order to illustrate the above concepts, consider the following nonconvex function and its convex and closed convex envelopes:
		\begin{align*}
			\hspace{-1cm}
			f(x) 
			\coloneqq 
			\begin{cases}
				1-x^2 & \mbox{ if } x^2<1 \\
				1     & \mbox{ if } x^2=1 \\
				\infty& \mbox{otherwise}
			\end{cases},
			\quad 
			\conv f(x) 
			\coloneqq 
			\begin{cases}
				0 & \mbox{ if } x^2<1 \\
				1 & \mbox{ if } x^2=1 \\
				\infty& \mbox{otherwise}
			\end{cases},
			\quad 
			\cl\conv f(x) 
			\coloneqq 
			\begin{cases}
				0 & \mbox{ if } x^2\leq1 \\
				\infty& \mbox{otherwise}
			\end{cases}.
		\end{align*}
		We see that the closed convex envelope differs from the convex envelope merely at some boundary points of the effective domain given by $\dom f = \lrbr{x\in\R \colon x^2\leq 1}$. However, if the feasible set over which $f$ is optimized intersects $\dom f$ only at its boundary, this may contribute to the relaxation gap.  
	\end{exmp}
    }
 
	\subsection{Copositive optimization prerequisites}\label{sec:Copositive optimization prerequisites}
	To carry out our analysis we will make heavy use of copositive optimization techniques. The core idea of copositive optimization is that for a possibly nonconvex quadratic optimization problem over a QCQP-feasible set $\FF$ we have the equality 
	\begin{align}\label{eqn:chlifting}
		\hspace{-0.6cm}\inf_{\x\in \FF} 
		\lrbr{
			\x\T \Qb\x +2 \q\T\x
		} 
		= 
		\inf_{
			\substack{\x\in\R^n,\\ \Xb\in\SS^n}
		}
		\lrbr{
			\Qb\bullet\Xb+ 2\q\T\x
			\colon 
			\begin{bmatrix}
				1 & \x\T \\ \x & \Xb 
			\end{bmatrix} 
			\in \GG(\FF)
		}\, ,
	\end{align}
	where
	\begin{align*}
		\GG(\FF) := \mathrm{clconv} \left\lbrace
		\left [ \bea{c}
		1 \\ \x  
		\ea \right ]
		\left [ \bea{c}
		1 \\ \x  
		\ea \right ]\T
		: \x\in \FF \right\rbrace\, .
	\end{align*}
	A proof for~\eqref{eqn:chlifting} can be found for example in \cite[Theorem 1]{bomze_optimization_2023} but earlier treatments are found in \cite{anstreicher_convex_2012,burer_copositive_2009,eichfelder_set-semidefinite_2013}. 
	Hence, given a description of $\GG(\FF)$ we can solve a convex optimization problem in order to obtain a solution to the original QCQP. The most important result in this regard is given by the theorem below (see \cite{burer_copositive_2009,eichfelder_set-semidefinite_2013,kim_geometrical_2020}).
	
    \begin{thm}\label{thm:BurerCPP}
		Let $\KK\subseteq \R^n$ be a closed, convex cone and let $\LL \coloneqq \lrbr{\x\in\KK\colon \Ab\x=\b}$ be nonempty \red{and let} $\LL_{\infty} \coloneqq  \lrbr{\x\in\KK\colon \Ab\x=\oo}$. Further, let $\Qb_i\in\SS^n,\ \q_i \in \R^n, \ i\in\irg{1}{l}$ and define $\BB\coloneqq \lrbr{j \colon \Qb_i\e_j  \neq \oo \mbox{ or } \q_i\T\e_j \neq 0 \mbox{ for some }i\in\irg{1}{l}}$ , \red{where $\e_j$ is the $j$-th 
  column of the $n\times n$ identity matrix}. Assume that\\
  a)  $\x\T\Qb_i\x+2\q_i\T\x-\omega_i$ is nonnegative for any $ \x\in\LL \mbox{ and } i\in\irg{1}{l}$, and\\  
  b) $\d\in\LL_{\infty} \implies d_j = 0$ for all $j \in \BB$.\\[0.2em]
		Then, for $\FF =  \lrbr{\x\in\KK\colon  \Ab\x = \b,\ \x\T\Qb_i\x+2\q_i\T\x=\omega_i,\ i\in\irg{1}{l}}$ we have	
		\begin{align*}
			\GG(\FF) = 
			\lrbr{
				\begin{pmatrix}
					1 &\x\T\\ \x & \Xb
				\end{pmatrix} \in \CPP\left(\R_+\times\KK\right)
				\colon
				\begin{array}{l}
					\Ab\x = \b,\		
					\mathrm{Diag}\left(\Ab\Xb\Ab\T\right)= \b\circ\b,\\
					\Qb_i\bullet\Xb+2\q_i\T\x=\omega_i,\ i\in\irg{1}{l},
				\end{array} 				 				
			}. 
		\end{align*}
	\end{thm}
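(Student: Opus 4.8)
Throughout write $\RR$ for the set on the right-hand side, and recall $\GG(\FF)=\mathrm{clconv}\{\,[1,\x\T]\T[1,\x\T]:\x\in\FF\,\}$. The plan is to prove the two inclusions separately. The inclusion $\GG(\FF)\subseteq\RR$ is routine: $\RR$ is closed and convex, being the intersection of the closed convex cone $\CPP(\R_+\times\KK)$ with affine equations, so it suffices to check that each generator $[1,\x\T]\T[1,\x\T]$ with $\x\in\FF$ lies in $\RR$. This is immediate, since $[1,\x\T]\T\in\R_+\times\KK$ makes the rank-one matrix completely positive, while substituting $\Xb=\x\x\T$ turns $\Ab\x=\b$, $\Diag(\Ab\Xb\Ab\T)=\b\circ\b$ and $\Qb_i\bullet\Xb+2\q_i\T\x=\omega_i$ into the defining equalities of $\FF$.

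For the reverse inclusion, first assume $\Mb=\begin{pmatrix}1&\x\T\\\x&\Xb\end{pmatrix}\in\RR$ possesses a \emph{finite} completely positive representation $\Mb=\sum_k\z_k\z_k\T$ with $\z_k=[t_k,\y_k\T]\T$, $t_k\ge0$, $\y_k\in\KK$. For the indices with $t_k>0$ put $\x_k:=\y_k/t_k\in\KK$ and $\lambda_k:=t_k^2$; the top-left entry gives $\sum_{t_k>0}\lambda_k=1$ and
\[
\Mb=\sum_{k:\,t_k>0}\lambda_k\begin{pmatrix}1\\\x_k\end{pmatrix}\begin{pmatrix}1&\x_k\T\end{pmatrix}+\sum_{k:\,t_k=0}\begin{pmatrix}0&\oo\T\\\oo&\y_k\y_k\T\end{pmatrix}.
\]
The core of the proof is to show that each $\x_k$ (with $t_k>0$) lies in $\FF$ while each $\y_k$ (with $t_k=0$) lies in $\LL_\infty$.

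Reading off the first block column gives $\x=\sum_{t_k>0}\lambda_k\x_k$, hence $\Ab\x=\sum_{t_k>0}\lambda_k\Ab\x_k=\b$. To upgrade this to $\Ab\x_k=\b$ for every $k$ I would use $\Diag(\Ab\Xb\Ab\T)=\b\circ\b$: evaluating $\Ab\Xb\Ab\T$ from the decomposition and comparing diagonals in each row $r$ of $\Ab$ gives
\[
\sum_{k:\,t_k>0}\lambda_k\,(\Ab\x_k)_r^2+\sum_{k:\,t_k=0}(\Ab\y_k)_r^2=\Big(\sum_{k:\,t_k>0}\lambda_k\,(\Ab\x_k)_r\Big)^2 .
\]
Since the $\lambda_k$ are convex weights, strict convexity of $s\mapsto s^2$ forces all $(\Ab\x_k)_r$ to coincide (hence to equal $b_r$) and all $(\Ab\y_k)_r$ to vanish; thus $\Ab\x_k=\b$, i.e. $\x_k\in\LL$, and $\Ab\y_k=\oo$, i.e. $\y_k\in\LL_\infty$. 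Assumption (b) now gives $(\y_k)_j=0$ for $j\in\BB$, and since the definition of $\BB$ makes every $\Qb_i$ supported on $\BB\times\BB$ and every $\q_i$ on $\BB$, we obtain $\y_k\T\Qb_i\y_k=0$. Hence the quadratic constraint collapses to $\sum_{t_k>0}\lambda_k(\x_k\T\Qb_i\x_k+2\q_i\T\x_k)=\omega_i$; but by (a) each summand is at least $\omega_i$ on $\LL$, so this convex-combination identity forces $\x_k\T\Qb_i\x_k+2\q_i\T\x_k=\omega_i$ for all $k,i$, i.e. $\x_k\in\FF$. For a $t_k=0$ term, choosing $\x_0$ to be any $\x_{k'}$ with $t_{k'}>0$ (one exists as $\sum t_k^2=1$), a short computation from (b) yields $\q_i\T\y_k=\x_0\T\Qb_i\y_k=\y_k\T\Qb_i\y_k=0$, so $\x_0+s\y_k\in\FF$ for all $s\ge0$; consequently $\begin{pmatrix}0&\oo\T\\\oo&\y_k\y_k\T\end{pmatrix}=\lim_{s\to\infty}s^{-2}\w_s\w_s\T$ with $\w_s:=[1,(\x_0+s\y_k)\T]\T$ is a recession direction of the closed convex set $\GG(\FF)$. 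Thus $\Mb$ is a convex combination of generators plus recession directions, so $\Mb\in\GG(\FF)$.

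The step I expect to be the genuine obstacle is the closure in the definition of $\CPP$: a general $\Mb\in\CPP(\R_+\times\KK)$ need not admit a finite completely positive representation, only an approximating sequence of finite ones, and these approximants need not satisfy the affine constraints. The finite-representation case just settled shows $\RR^\circ\subseteq\GG(\FF)$ for the (non-closed) set $\RR^\circ$ of constraint-feasible \emph{finite} combinations, while the generators give $\conv\{\cdots\}\subseteq\RR^\circ$, so taking closures pins down $\cl\RR^\circ=\GG(\FF)$. What remains is to show that intersecting with the affine constraints commutes with this closure, i.e. $\RR\subseteq\cl\RR^\circ$. I would handle this by a limiting argument: bound the number of rank-one terms uniformly along the approximating sequence via Carathéodory, then use the boundedness furnished by $\Diag(\Ab\Xb\Ab\T)=\b\circ\b$ (which holds in the limit and hence approximately along the sequence) to keep the non-degenerate directions $\x_k=\y_k/t_k$ from escaping to infinity, extract convergent subsequences of weights and directions, and absorb the terms whose weight degenerates to zero as the recession directions identified above; the careful version of this reduction is what the cited references \cite{burer_copositive_2009,eichfelder_set-semidefinite_2013,kim_geometrical_2020} supply.
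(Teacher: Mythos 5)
The paper never proves \cref{thm:BurerCPP}: it is imported from the literature with a pointer to \cite{burer_copositive_2009,eichfelder_set-semidefinite_2013,kim_geometrical_2020}, so the comparison here is with the standard argument in those references --- which your proof reconstructs essentially in full, and correctly. All the load-bearing steps are in place: the easy inclusion via the generators; the finite decomposition $\Mb=\sum_k \z_k\z_k\T$ split according to $t_k>0$ versus $t_k=0$; the Jensen/strict-convexity argument applied to $\mathrm{Diag}(\Ab\Xb\Ab\T)=\b\circ\b$, which simultaneously forces $\Ab\x_k=\b$ (so $\x_k\in\LL$) and $\Ab\y_k=\oo$ (so $\y_k\in\LL_\infty$); assumption b) annihilating $\Qb_i\y_k$ and $\q_i\T\y_k$ so that the quadratic constraint collapses onto the $t_k>0$ terms; assumption a) turning the resulting convex-combination identity into $\x_k\in\FF$ for every $k$; and the identification of the $t_k=0$ blocks as recession directions of $\GG(\FF)$ via the half-lines $\x_0+s\y_k\in\FF$. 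This is precisely the Burer mechanism as generalized by Eichfelder--Povh, and I find no error in it.

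The one substantive criticism concerns your closing paragraph, where you flag the ``closure in the definition of $\CPP$'' as the genuine remaining obstacle and defer its resolution to the references. Under this paper's conventions that obstacle is vacuous: Section~\ref{sec:Copositive optimization prerequisites} defines $\CPP(\KK)\coloneqq\conv\lrbr{\x\x\T\colon\x\in\KK}$ \emph{without} a closure, so every member of the right-hand set admits a finite completely positive representation by definition, and your ``finite case'' is already the entire reverse inclusion --- the Carath\'eodory-plus-compactness extraction you sketch is never needed. Moreover, the tension this creates with your easy direction (where you assert $\CPP(\R_+\times\KK)$ is closed, which the closure-free definition does not hand you for free) resolves in your favor: for a closed convex cone $\KK$, the cone $\conv\lrbr{\z\z\T\colon\z\in\R_+\times\KK}$ is automatically closed, since it is the conic hull of the compact convex set $\conv\lrbr{\z\z\T\colon\z\in\R_+\times\KK,\ \norm{\z}=1}$, on which the trace is identically $1$, so that the origin is excluded. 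With that one-line observation supplied, both of your inclusions stand, and your proof is complete rather than conditional on the cited references.
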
	
	The characterization involves the so-called \textit{set-completely positive matrix cone }
	\begin{align*}
		\CPP(\KK)\coloneqq \conv\lrbr{\x\x\T\colon \x\in\KK}\subseteq \SS_+^n,
	\end{align*} 
	where the cone $\KK\subseteq\R^n$ is called the \textit{ground cone}.
	These matrix cones have been heavily studied in for example \cite{berman_completely_2003,bomze_solving_2002,dickinson_copositive_2013,dur_interior_2008,dur_copositive_2010} and many more (see \cite[Section 2.4]{bomze_uncertainty_2022} for a guide through literature). Some interesting properties are discussed and (not necessarily for the first time) proved in \cite[Proposition 11]{bomze_optimization_2023} such as the fact that the dual cones are given by  
	\begin{align*}
		\COP(\KK) \coloneqq\lrbr{\Mb\in\SS^n\colon \v\T\Mb\v\geq 0, \ \forall \v\in\KK}\supseteq \SS_+^n\, ,
	\end{align*}
	the so called \textit{set-copositive matrix cones} with respect to the ground cone $\KK$. Testing membership in any of these matrix cones is difficult except for special cases, e.g. $\CPP(\R^n_+) \subseteq \SS_+^n\cap\NN^n$ (and $\COP(\R^n_+)\supseteq\SS_+^n+\NN^n$) with equality for $n\leq4$ where $\NN^n\subseteq\SS^n$ is comprised of the nonnegative matrices (see \cite{maxfield_matrix_1962}).
	
	\subsection{A compact representation}
	
	To keep the text concise we will now introduce notation that will allow the core of the discussion to be presented in a quite neat way. First, we consider the matrix decomposition: 
	\begin{align*}
		\Pb \coloneqq 
		\begin{pmatrix}
			x_0 & \x\T &\y\T \\
			\x& \Xb & \Zb\T \\ 
			\y&\Zb & \Yb \\ 
		\end{pmatrix}\in\SS^{n} \ .
	\end{align*}
	where $n \coloneqq n_x+n_y+1$
        . Later in the text, we will study approximations and characterizations of $\phi$ that are optimization problems in $\Pb$. The following proposition will be instructive\red{, as it details the role of the constituents $\Xb,\ \Yb,\ \Zb$ and $x_0$}. Its straightforward proof is omitted. 
	\begin{prop}\label{prop:rankone}
		Suppose $\Pb$ is positive-semidefinite (psd) and of rank one. Then we have $\Pb = \z\z\T$ for some $\z\in\R^{n+1}$. Furthermore, if $\z\T = [1,\x\T,\y\T]$, then $\Xb = \x\x\T$, $\Zb =\y\x\T,\ \Yb = \y\y\T$ and $x_0=1$. 
	\end{prop}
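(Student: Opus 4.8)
The plan is to reduce everything to the elementary spectral structure of rank-one positive-semidefinite matrices. First I would invoke the standard fact that a symmetric matrix is psd of rank one precisely when it is an outer product: applying the spectral decomposition to $\Pb$, positive-semidefiniteness gives nonnegative eigenvalues, and the rank-one hypothesis forces all but one of them to vanish, so $\Pb = \lambda\,\u\u\T$ for a single $\lambda>0$ and unit eigenvector $\u$. Setting $\z \coloneqq \sqrt{\lambda}\,\u$ then yields $\Pb=\z\z\T$, which is the first claim.

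For the second part I would work directly with the partition. Under the normalization $\z\T=[1,\x\T,\y\T]$, the vector $\z$ is split conformally with the block structure of $\Pb$ into a leading scalar entry equal to $1$, a block $\x\in\R^{n_x}$, and a block $\y\in\R^{n_y}$. Forming the outer product block-by-block gives
\[
\z\z\T
=
\begin{pmatrix}1\\ \x\\ \y\end{pmatrix}
\begin{pmatrix}1 & \x\T & \y\T\end{pmatrix}
=
\begin{pmatrix}
1 & \x\T & \y\T\\
\x & \x\x\T & \x\y\T\\
\y & \y\x\T & \y\y\T
\end{pmatrix}.
\]
Matching this entry-wise against the defining blocks of $\Pb$ immediately identifies $x_0=1$, $\Xb=\x\x\T$, $\Yb=\y\y\T$, and, reading off the off-diagonal block, $\Zb\T=\x\y\T$, i.e.\ $\Zb=\y\x\T$. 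Note that the border blocks labelled $\x$ and $\y$ in $\Pb$ are automatically consistent with the corresponding entries of $\z\z\T$, so no further conditions arise.

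There is no genuine obstacle here; the only point requiring a little care is the bookkeeping between the two roles of the symbols $\x,\y$ — once as the border blocks of $\Pb$ and once as the sub-blocks of the normalized $\z$ — which the normalization $\z\T=[1,\x\T,\y\T]$ reconciles by construction. I would also remark in passing that the hypothesis that the leading component of $\z$ equals $1$ is exactly what pins down $x_0=(\z\z\T)_{11}=1$; dropping it would only rescale the identities by a common factor.
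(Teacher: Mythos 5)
Your proof is correct and is precisely the ``straightforward proof'' the paper explicitly omits: the spectral argument for $\Pb=\z\z\T$ followed by blockwise matching of the outer product against the partition of $\Pb$ is the standard route, and your bookkeeping of the border blocks and the normalization $z_1=1$ (pinning down $x_0=1$) is handled correctly. No gaps; the only cosmetic point is that the dimension of $\z$ should match the paper's convention $\Pb\in\SS^n$ with $n=n_x+n_y+1$ (so $\z\in\R^n$), an off-by-one already present in the statement itself rather than in your argument.
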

	
	For these matrices, we define the linear functions 
    $q \colon  \SS^{n}\mapsto \R,$ and associated coefficient matrices 
    $\Qb\in\SS^{n} $ so that 
	\begin{align*}		
		q(\Pb) 
		\coloneqq&  \Ab\bullet\Xb+2\a\T\x+2\Bb\T \bullet \Zb +\Cb\bullet \Yb + 2\c\T\y\
		= \Qb\bullet\Pb\,.
	\end{align*}
	Note that we can always find values for the entries of $\Qb$ such that $q(\z\z\T) = f(\x,\y)$ whenever $\z\T = [1,\x,\y\T]\in\R^n$ and $f$ is the quadratic function in the definition of $\phi$.
	
	In order to ensure that the parameter $\x$ equals the constituent $\x$ of the matrix variable $\Pb$ we add the constraint $\BB_{1}(\Pb) = \x$ to the respective optimization problems where $\BB_1\colon \SS^{n} \mapsto \R^{n_x}$ is an appropriate projection. We make use of a similar functions which project $\Pb$ onto $(\x,\Xb)$ and onto $(\x,\Xb,\Zb)$, which we will call $\BB_2\colon \SS^{n} \mapsto \R^n \times \SS^{n_x}$ and  $\BB_3\colon \SS^{n} \mapsto\R^n \times \SS^{n_x}\times \R^{n_y\times n_x}$, respectively. 
	
	\subsection{Main assumptions}
	
	We now turn to discussing our two main assumptions.
	\begin{asm}\label{asm:CharacterizeG}
		The sets $\FF$ is such that $\GG(\FF)$ is an affine slice of matrix cone  $\CPP\subseteq \SS^n_+$:
		\begin{align*}
			\GG(\FF)= 
			\lrbr{
				\Pb \in\CPP
				\colon \AA\left(\Pb\right) = \b
			}\subseteq\SS^{n}\, ,
		\end{align*}
		where $\CPP\subseteq\SS_+^{n}$ is an 
		appropriate closed convex matrix cone (with  dual cone $\COP$),  an appropriate linear map $\AA \colon  \SS^{n} \mapsto \R^{m}$, and a vector $\b\in\R^{m}\ $. 
	\end{asm}
	\noindent
	\cref{thm:BurerCPP} in \cref{sec:Copositive optimization prerequisites} guarantees that this assumption is not vacuous and most of our analysis rests on it for our purpose of constructing underestimators. We have so far not made any assumptions on the structure of $\FF\,$, but in practice, restrictions are imposed by the fact that $\GG(\FF)$ is only known for specific instances of $\FF$. \blue{While \cref{thm:BurerCPP} already engulfs a quite large class of choices for $\FF$, additional results which can be found in \cite{bomze_optimization_2023,burer_gentle_2015} and references therein, illustrate that above assumption is of moderate strength.} 
	However, for some parts of our discussion, we chose to put an additional restriction into place. 
	\begin{asm}\label{asm:CompactFs}
		$\FF$ is a nonempty, compact set.
	\end{asm}
	\begin{prop}\label{prop:CompactFs}
		Under \cref{asm:CompactFs} the set $\GG(\FF)$ is also nonempty and compact, even if the "$\cl$"-operator were omitted from the definition. 
	\end{prop}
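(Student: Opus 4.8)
The plan is to show that $\GG(\FF)$ is already the convex hull of a compact set, which in the finite-dimensional ambient space of symmetric matrices is automatically compact, so that the closure operation in the definition is redundant.

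First I would settle nonemptiness: by \cref{asm:CompactFs} there is some $\x\in\FF$, and then the rank-one matrix $[1,\x\T]\T[1,\x\T]$ belongs to the generating set, so $\GG(\FF)\neq\emptyset$.

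For compactness, my first step is to note that the generating set
$$
M\coloneqq\lrbr{\begin{bmatrix}1\\ \x\end{bmatrix}\begin{bmatrix}1\\ \x\end{bmatrix}\T\colon \x\in\FF}
$$
is compact: it is the image of the compact set $\FF$ under the map $\x\mapsto[1,\x\T]\T[1,\x\T]$, whose entries are polynomials in the components of $\x$ and hence continuous, and continuous images of compact sets are compact. The central step is then to invoke the classical fact that, in a finite-dimensional space, the convex hull of a compact set is again compact. Letting $d$ denote the (finite) dimension of the symmetric-matrix space hosting $M$, Carathéodory's theorem writes every element of $\conv M$ as a convex combination of at most $d+1$ members of $M$; hence $\conv M$ is the image of the compact set $\Delta_{d+1}\times M^{d+1}$ under the continuous map $(\ggl,\Pb_0,\dots,\Pb_d)\mapsto\sum_{i=0}^{d}\lambda_i\Pb_i$, and is therefore compact. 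Being compact it is in particular closed, so $\cl\conv M=\conv M$. This shows at once that $\GG(\FF)=\conv M$ is nonempty and compact and that the ``$\cl$''-operator may be omitted.

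I expect the only nonroutine ingredient to be the compactness of the convex hull of a compact set. This property fails in infinite dimensions and rests essentially on the finite dimensionality of the symmetric-matrix space, exploited through Carathéodory's theorem; the remaining facts---continuity of the lifting map, preservation of compactness under continuous images, and closedness of compact sets---are routine.
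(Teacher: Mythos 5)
Your proof is correct and takes essentially the same route as the paper: both arguments observe that $\GG(\FF)$ is the convex hull of the compact continuous image of $\FF$ under the rank-one lifting map and then invoke the finite-dimensional fact that the convex hull of a compact set is compact, which renders the closure operation redundant. The only difference is cosmetic---the paper cites this fact as the Carath\'eodory/Krein theorem, whereas you prove it directly by writing $\conv M$ as the continuous image of $\Delta_{d+1}\times M^{d+1}$ via Carath\'eodory's theorem.
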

	\begin{proof}
		The statement follows from the fact that $\GG(\FF)$ is the convex hull of a compact set, the image of $\FF$ under a continuous transformation from $\R^n$ to $\SS^{n+1}$, and therefore nonempty but also compact by the Caratheodory/Krein theorem (see  \cite[Corollary 5.33, p.185]{aliprantis_infinite_2006}).
	\end{proof}
	
	This assumption is not strictly necessary for any part of our analysis, but significantly simplifies the flow of the arguments, especially (but not exclusively) as they pertain to strong conic duality. In particular, the assumption implies the following proposition 
	
	\begin{prop}
		Under \cref{asm:CompactFs} we have zero duality gap in the following sense:
		\begin{align}\label{eqn:Slater}
			\inf_{\Pb\in\SS^n}
			\lrbr{
				\Qb\bullet\Pb 
				\colon 
				\AA\left(\Pb\right) = \b,\ \Pb \in\CPP
			} 
			= 
			\sup_{\ggl\in\R^m}
			\lrbr{
				\b\T\ggl
				\colon 
				\Qb-\AA^*(\ggl)\in\COP
			},
		\end{align}
		because of the existence of a Slater point $\ggl_{SL}(\Qb)$ for the right hand side dual. This point persists as a dual Slater point even if the primal is made infeasible via additional constraints.
	\end{prop}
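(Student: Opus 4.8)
The plan is to establish zero duality gap by verifying a Slater (strict feasibility) condition for the dual conic program on the right-hand side of~\eqref{eqn:Slater}: by the standard strong duality theorem for conic programs, strict feasibility of the dual forces equality of the two optimal values together with attainment of the primal infimum. Concretely, I would produce a multiplier $\ggl_{SL}(\Qb)$ with $\Qb-\AA^*(\ggl_{SL})\in\inn\COP$. Since $\SS^n_+\subseteq\COP$, the dual cone has nonempty interior (it contains the positive definite matrices), so $\inn\COP\neq\emptyset$ and such a search is not vacuous.

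The key input is \cref{prop:CompactFs}: under \cref{asm:CompactFs} the primal feasible set $\GG(\FF)=\lrbr{\Pb\in\CPP\colon \AA(\Pb)=\b}$ is nonempty and compact. Being a nonempty, bounded, closed convex set, its recession cone vanishes, and that recession cone is exactly $\lrbr{\Db\in\CPP\colon \AA(\Db)=\oo}=\CPP\cap\ker\AA$; hence $\CPP\cap\ker\AA=\lrbr{\Ob}$. Now $\CPP\subseteq\SS^n_+$ is a closed, pointed cone, and because $\inn\COP\neq\emptyset$ it admits a compact base $B=\lrbr{\Db\in\CPP\colon \Vb\bullet\Db=1}$ for a suitable $\Vb\in\inn\COP$. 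Since no nonzero element of $\CPP$ lies in $\ker\AA$, the compact convex set $\AA(B)\subseteq\R^m$ does not contain the origin, so it can be strictly separated from it: there is $\mmu\in\R^m$ and $\delta>0$ with $\mmu\T\AA(\Db)\geq\delta$ for all $\Db\in B$. By positive homogeneity this yields $\AA^*(\mmu)\bullet\Db=\mmu\T\AA(\Db)>0$ for every $\Db\in\CPP\setminus\lrbr{\Ob}$, i.e. $\AA^*(\mmu)\in\inn\COP$. Finally, since $\inn\COP$ is an open convex cone, $\Qb+t\,\AA^*(\mmu)=t\bigl(\AA^*(\mmu)+t\inv\Qb\bigr)\in\inn\COP$ for all sufficiently large $t>0$; setting $\ggl_{SL}(\Qb):=-t\,\mmu$ gives $\Qb-\AA^*(\ggl_{SL})\in\inn\COP$, the desired dual Slater point, with the required dependence on $\Qb$ entering only through the scale $t$.

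The persistence claim is then immediate: appending further constraints to the primal amounts to adding rows to $\AA$, say $\AA\mapsto\widetilde\AA=(\AA;\AA')$ with $\ker\widetilde\AA\subseteq\ker\AA$, so that $\widetilde\AA^*(\ggl,\ggl')=\AA^*(\ggl)+(\AA')^*(\ggl')$; extending $\ggl_{SL}$ by zeros in the new coordinates leaves $\Qb-\widetilde\AA^*(\ggl_{SL},\oo)=\Qb-\AA^*(\ggl_{SL})\in\inn\COP$ unchanged, and dual feasibility is indifferent to whether the augmented primal is still feasible. The main obstacle I anticipate is the middle step: converting boundedness of $\GG(\FF)$ into the strictly feasible multiplier $\mmu$. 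This hinges on $\CPP$ possessing a compact base, which in turn needs $\CPP$ to be pointed (inherited from $\SS^n_+$) and $\COP=\CPP^*$ to be solid (inherited from $\SS^n_+\subseteq\COP$); once these structural facts are secured, the separation argument is routine.
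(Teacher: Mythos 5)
Your proof is correct, and it reaches the Slater point by a genuinely different route than the paper. The paper argues by contradiction entirely inside $\SS^n$: supposing the affine set $\lrbr{\Qb-\AA^*(\ggl)\colon \ggl\in\R^m}$ misses $\interior\COP$, it invokes the separation theorem \cite[Theorem 11.2]{rockafellar_convex_2015} to produce a nonzero $\ol\Pb$ with $\ol\Pb\bullet\Yb\geq 0$ on $\interior\COP$ (hence $\ol\Pb\in\CPP$) and $\ol\Pb\bullet(\Qb-\AA^*(\ggl))=0$ for all $\ggl$, from which (testing $\ggl=\oo$ and $\ggl=\AA(\ol\Pb)$) it extracts $\AA(\ol\Pb)=\oo$; such a $\ol\Pb$ is a recession direction of the nonempty primal feasible set, contradicting \cref{asm:CompactFs}. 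You instead run the contrapositive constructively: compactness of $\GG(\FF)$ gives $\CPP\cap\ker\AA=\lrbr{\Ob}$, the compact base $B$ (legitimate because $\SS^n_+\subseteq\COP$ makes $\COP$ solid while $\CPP\subseteq\SS^n_+$ is pointed and, by \cref{asm:CharacterizeG}, closed) lets you strictly separate $\AA(B)$ from the origin in $\R^m$, and the resulting $\mmu$ with $\AA^*(\mmu)\in\interior\COP$ is scaled until it dominates $\Qb$. Both arguments are two sides of the same compactness-versus-dual-interior duality, but yours separates in $\R^m$ and the paper's in $\SS^n$. What your version buys is an explicit Slater point $\ggl_{SL}(\Qb)=-t\mmu$ whose \emph{direction} is independent of $\Qb$, with only the scale $t$ depending on it --- strictly more information than the paper's pure existence claim, and it makes the persistence statement equally transparent (both treatments pad the new multipliers with zeros); the cost is the base-and-separation machinery and the structural facts you correctly flag as prerequisites, which the paper's shorter contradiction argument never needs. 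One small point to make explicit: your appeal to standard conic strong duality under dual strict feasibility (the paper's citation of \cite[Proposition 2.8]{shapiro_duality_2001}) delivers equality in \eqref{eqn:Slater} with a finite common value only because the primal is feasible, which \cref{prop:CompactFs} supplies; in the augmented, possibly infeasible case both sides equal $+\infty$, exactly the reading the paper intends.
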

	\begin{proof}
		For the convenience of the reader, we provide an argument for this well-known statement \red{(see e.g. \cite[Proposition 2.8]{shapiro_duality_2001})}. 
		For any $\Qb$, the duality gap closes due to a dual Slater point attained at some $\ggl_{SL}(\Qb)\in\R^m$. Indeed, if it were the case that  $\lrbr{\Qb-\AA^*(\ggl)\colon \ggl\in\R^m}\cap\interior\COP = \emptyset$, by \cite[Theorem 11.2]{rockafellar_convex_2015} we had a $\ol{\Pb}$ such that 
		\begin{align*}
			\ol\Pb\bullet \Yb 
			\geq 0,
			\quad 
			\forall \ \Yb\in\interior\COP, \quad 
			\mbox{and}
			\quad 
			\ol\Pb\bullet \Yb  = 0,
			\quad 
			\forall\ \Yb=\Qb-\AA^*(\ggl)\colon \ggl\in\R^m.
		\end{align*}
		From the first inequality, we see that $\ol\Pb\in\CPP$, while the second implies that 
		\begin{align*}
			0=\ol\Pb\bullet(\Qb-\AA^*(\ggl)) = \ol\Pb\bullet\Qb-\ggl\T\AA(\ol\Pb), \ \forall \ggl\in\R^m,
		\end{align*}
		in particular, for $\ggl=\oo$ we get $\Qb\bullet\ol\Pb = 0$. \red{The condition then reads as $\ggl\T\AA(\bar{\Pb}) = 0, \ \forall \ggl\in\R^m$, and we can choose $\ggl = \AA(\bar{\Pb})$ to show that $\ggl\T\AA(\bar{\Pb}) = \|\AA(\bar{\Pb})\|^2_2 = 0$ implies $\AA(\bar{\Pb}) = \oo$}. By \cref{asm:CompactFs} the left hand side problem has a feasible solution $\Pb_0$ and for any $\mu\geq0$ we get that $\AA(\Pb_0+\mu\ol\Pb) = \AA(\Pb_0)+\AA(\mu\ol\Pb) = \b+\mu\oo = \b$ and $\Pb_0+\mu\ol\Pb\in\CPP$ so that $\ol\Pb$ is a direction of recession contradicting \cref{asm:CompactFs}.       
		The last statement follows from the fact that the dual multipliers of the additional constraints can be set to zero. 
	\end{proof}
    \blue{
    Later in the text, we will consider instances of the optimization problems in \eqref{eqn:Slater} with additional constraints, fixing parts of the primal variable $\Pb$ to inputs of optimal value functions. This may render the resulting optimization problem infeasible even if $\GG(\FF)\neq \emptyset$ as per \cref{asm:CompactFs}. However, courtesy of \cref{asm:CompactFs} we always avoid the doubly infeasible case so that the duality gap stays closed.
    }	
	\section{Underestimators based on copositivity}
	
	\subsection{Affine underestimators via closed convex envelopes}\label{sec:Underestimating via closed convex envelopes}
	
	It is known that convex reformulations based on $\GG(\FF)$ can be used to construct convex envelopes of the objective function over $\FF$ by parametrizing the reformulations by the vector variable (see \cite[Theorem 1]{anstreicher_convex_2012}, later generalized in \cite[Proposition 3]{yildirim_alternative_2022}).  This insight serves as inspiration for the main results of this section, as we seek to characterize closed convex envelopes for the optimal value function $\phi$. The following theorem presents some vital insights on $\cl\conv \phi$.
	
	\begin{thm}\label{thm:ClosedConvexEnvelope}
		We pose~\cref{asm:CharacterizeG} that $\GG(\FF) = \CPP \cap \mathcal{A}^{-1}(\b)$. Define
		\begin{align*}
			\begin{split}
				\widehat{\phi}(\x) 
				\coloneqq
				\inf_{\Pb\in\SS^{n}}
				\lrbr{					
					q(\Pb)
					\colon			\Pb\in\GG(\FF),\
					\BB_{1}(\Pb) = \x
				}\, .
			\end{split}
		\end{align*}
		Then the dual of the infimum problem is given by 
		\begin{align*}
			\theta(\x)
			\coloneqq
			\sup_{\w,\ggl}
			\lrbr{
				\b\T\ggl+\x\T\w 
				\colon 
				\Qb-\AA^*(\ggl)-\BB_{1}^*(\w) \in \COP
			}\, .
		\end{align*}
		Also, let $h\coloneqq \cl\conv\phi$ be the closed convex envelope of $\phi$, cf.~\cref{prop:clconv}.  Then the following statements hold. 
		\begin{enumerate}
			\item  
			$\widehat{\phi}$ is a convex underestimator for $\phi$ with $\conv\dom\phi \subseteq \dom \widehat{\phi} \subseteq \cl\conv\dom\phi$,
			and under \cref{asm:CompactFs} the reverse inclusions hold as well. 
			\item We have $\widehat{\phi}(\x) \geq h(\x)$ for all $\x\in\R^{n_x}$ and thus $\dom h \supseteq\dom \widehat\phi\supseteq \dom \phi \neq \emptyset$.
			\item If ${\phi}(\x_0) = -\infty$ for some $\x_0\in\R^{n_x}$ then $\widehat{\phi}(\x) = -\infty$ for all $\x\in\relint\dom\phi$, and $h\equiv -\infty$ everywhere. 
			\item If $\theta$ has a feasible solution (e.g.~under~\cref{asm:CompactFs}), then $h$ is proper; furthermore, 
			\item if $\theta(\x)$ exhibits no duality gap for all $\x\in\R^{n_x}$ (e.g.~under~\cref{asm:CompactFs}), then actually  $\widehat{\phi} = \theta= h$.
		\end{enumerate} 
	\end{thm}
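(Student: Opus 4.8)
The plan is to prove the two equalities separately, the first being essentially free and the second requiring one extra regularity observation. The equality $\widehat{\phi}=\theta$ is immediate: by hypothesis the infimum problem defining $\widehat{\phi}(\x)$ and its conic dual $\theta(\x)$ have the same value for every $\x\in\R^{n_x}$, which is exactly the meaning of ``no duality gap for all $\x$''. (The parenthetical ``under \cref{asm:CompactFs}'' is supplied by the earlier Slater-point argument, which guarantees a closed gap even after fixing $\BB_{1}(\Pb)=\x$.) So the whole task reduces to identifying this common function with $h=\cl\conv\phi$.

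The crucial step I would isolate is that $\theta$ is not merely convex but \emph{closed}. The point is that the feasible set $\DD\coloneqq\lrbr{(\w,\ggl)\colon\Qb-\AA^*(\ggl)-\BB_{1}^*(\w)\in\COP}$ does not depend on $\x$; the variable $\x$ enters $\theta$ only through the objective, and for each fixed $(\w,\ggl)\in\DD$ the map $\x\mapsto\b\T\ggl+\x\T\w$ is affine, hence a closed convex function of $\x$. Therefore $\theta$, being the pointwise supremum over $\DD$ of a family of closed convex functions, is itself closed and convex. Through the identity $\widehat{\phi}=\theta$ this promotes the convexity of $\widehat{\phi}$ already established in part~1 to full closedness.

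With closedness secured the rest is a sandwich argument. By part~1, $\widehat{\phi}$ is an underestimator of $\phi$; combined with the previous step it is a closed convex function majorized by $\phi$. \cref{prop:clconv} characterizes $h=\cl\conv\phi$ as the \emph{largest} such function, so $\widehat{\phi}\leq h$. The reverse inequality $\widehat{\phi}\geq h$ is exactly part~2, and together these give $\widehat{\phi}=h$; chaining with $\widehat{\phi}=\theta$ yields $\widehat{\phi}=\theta=h$. I expect the only subtle point to be the closedness of $\theta$, since everything else is bookkeeping; I would also spend a moment confirming that the supremum-of-affine-functions argument survives the degenerate cases (empty $\DD$ giving $\theta\equiv-\infty$, or $\phi$ attaining $-\infty$ as in part~3), so that the sandwich remains valid without any properness assumption.
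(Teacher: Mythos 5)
Your treatment of item 5 is correct and is, in essence, the paper's own proof of that item: both hinge on the observation that the dual feasible set $\lrbr{(\w,\ggl)\colon \Qb-\AA^*(\ggl)-\BB_{1}^*(\w)\in\COP}$ does not depend on $\x$, so that $\theta$ is a pointwise supremum of affine functions of $\x$ and hence closed and convex; closedness plus the underestimation property then sandwiches $\widehat{\phi}=\theta$ between $h$ and $h$ via \cref{prop:clconv} and item 2. Your explicit check of the degenerate cases (empty dual feasible set giving $\theta\equiv-\infty$, improper $\phi$) is, if anything, slightly more careful than the paper, whose proof of item 5 simply assumes a feasible dual point.

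The genuine gap is coverage: the statement is a five-part theorem, and you prove only part 5, invoking parts 1 and 2 as established and leaving 3 and 4 untouched — yet those parts are where the paper's actual work lies, and they are not bookkeeping. Part 1 requires (i) the rank-one substitution $\Xb=\x\x\T$, $\Zb=\y\x\T$, $\Yb=\y\y\T$ to get $\widehat{\phi}\le\phi$, (ii) a convexity argument that mixes improving sequences, since the infimum defining $\widehat{\phi}$ need not be attained, and (iii), for the inclusion $\dom\widehat{\phi}\subseteq\cl\conv\dom\phi$, an approximation of each feasible $\Pb^i\in\GG(\FF)$ by convex combinations of dyadic matrices with base points in $\FF$, combined with the estimate $\|\BB_{1}(\widehat{\Pb}^i-\Pb^i)\|_2\le\|\widehat{\Pb}^i-\Pb^i\|_F$. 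Part 2 — the inequality $\widehat{\phi}\geq h$ on which your sandwich rests — is likewise not a formal duality triviality: the paper represents $h$ as a supremum of affine minorants (Rockafellar, Theorem 12.1) and, for each minorant $\v\T\x+v_0$, applies the exactness of the copositive lifting \eqref{eqn:chlifting} to the shifted problem $\inf_\x\lrbr{\phi(\x)-\v\T\x-v_0}$ to transfer the inequality to $\widehat{\phi}$; this is precisely where \cref{asm:CharacterizeG} enters, and your argument silently depends on it. Part 3 needs the fact that an improper convex function equals $-\infty$ on the relative interior of its domain (Rockafellar, Theorem 7.2), and part 4 the weak-duality construction of an affine underestimator from any dual feasible $(\w^*,\ggl^*)$. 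So: correct and paper-identical where your proposal engages the theorem, but it engages only one fifth of it, and the cited inputs would themselves have to be proved for the attempt to stand alone.
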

	\begin{proof}
		
		We start by proving 1. For a given $\x\in\dom\phi$ let $\y$ be a feasible solution for the infimum problem.   
		By setting $\Xb = \x\x\T, \, \Zb = \y\x\T, \, \Yb = \y\y\T$, we get a feasible solution for $\widehat{\phi}(\x)$ with the same objective function value, so we have a lower bound and $\widehat\phi (\x)\le \phi(\x)$. Outside $\dom\phi$, the relation holds regardless.
		
		Now assume $\lrbr{\x_1,\x_2}\subseteq \dom\widehat{\phi}$  and $\gl\in[0,1]$. The  convex combination with weight $\gl$ of the associated improving sequences of $\widehat{\phi}(\x_1)$ and  $\widehat{\phi}(\x_2)$ give an improving sequence for $\widehat{\phi}\left(\gl\x_1+(1-\gl)\x_2\right)$ so that, since the objective is linear, we have \begin{align*}
			\widehat{\phi}\left(\gl\x_1+(1-\gl)\x_2\right)\leq \gl\widehat{\phi}(\x_1) + (1-\gl)\widehat{\phi}(\x_2)\, .   
		\end{align*}  
		Outside $\dom\widehat{\phi}$, the function $\widehat{\phi}=+\infty$, so that in total $\widehat{\phi}$ is convex, and by above arguments, a convex underestimator of $\phi$. It follows $\conv\dom\phi \subseteq \dom \widehat{\phi}$. For the second inclusion, consider an $\x\in \dom \widehat{\phi}$, so that there must be a feasible improving sequence $\Pb^i\in\GG(\FF)$, $i \in\N$, such that $q(\Pb^i)\rightarrow \widehat{\phi}(\x)<\infty$ as $i\to\infty$, and  moreover $\BB_{1}(\Pb^i) = \x$ for all $i$ by feasibility. Since $\Pb^i$ are themselves limit points of convex combinations of dyadic matrices \red{(i.e. matrices of the form $\z\z\T$)} in $\GG(\FF)$ by definition of that set, we may construct another sequence
		\begin{align*}
			\widehat{\Pb}^i = \sum_{\ell=1}^{K}\gl_i^\ell
			\begin{bmatrix}
				1 \\ \x_i^\ell \\ \y_i^\ell
			\end{bmatrix}
			\begin{bmatrix}
				1 \\ \x_i^\ell \\ \y_i^\ell
			\end{bmatrix}\T \quad\mbox{ with } 
			\begin{bmatrix}
				\x_i^\ell \\ \y_i^\ell
			\end{bmatrix}\in\FF, \ \ggl_i\in\Delta_K, \mbox{ for all } i \in\N.
		\end{align*}
		such that $\lim_{i\in\N}\|\widehat{\Pb}^i-\Pb^i\|_F = 0$ and in addition $\x_i^\ell\in\dom\phi$ for all $(\ell,i)\in\irg{1}{K} \times\N$. But we also get from the definition of the Frobenius norm that $\|\x-\sum_{\ell=1}^{K}\gl^\ell\x_i^\ell\|_2 = \|\BB_{1}\left(\widehat{\Pb}^i-\Pb^i\right)\|_2 \leq \|\widehat{\Pb}^i-\Pb^i\|_F$, and we see that indeed 
		$$ \x = \lim_{i\to \infty} \sum_{\ell=1}^{K}\gl^\ell\x_i^\ell \in\cl\conv\dom\phi\, .$$ 
		Under~\cref{asm:CompactFs}, the effective domain $\dom{\phi}$ is closed as it is the projection of a compact set by~\eqref{domfi}, so that the reverse inclusions follow.  
		
		Now, to prove 2., consider $h$, which by definition is a closed, convex underestimator of $\phi$. By \cite[Theorem 12.1.]{rockafellar_convex_2015} it is therefore given by $h(\x)=\sup_{(\v,v_0)\in\VV}\lrbr{\v\T\x+v_0}$ where $\VV\subseteq\R^{n_x+1}$ is an appropriate set of coefficients.  If $\VV = \emptyset$, we have $h\equiv -\infty$ and the statement follows trivially. So assume the contrary and pick an arbitrary $(\v,v_0)\in\VV$. By assumption, we have: 
		\begin{align*}
			\phi(\x)\geq h(\x)\geq \v\T\x+v_0, \quad   \forall \x\in\R^{n_x},
		\end{align*}
		which implies that 
		\begin{align*}
			0 \leq& 
			\inf_{\x\in\R^{n_x}}
			\lrbr{
				\phi(\x)-\v\T\x-v_0
			},   \\
			=&  
			\inf_{\x\in\R^{n_x},\y\in\R^{n_2}} 
			\lrbr{  
				\Ab\bullet\x\x\T+2\a\T\x
				+
				2\x\T\Bb\T \y +\y\T \Cb\y+2\c\T\y -\v\T\x-v_0 
				\colon 
				\left[\x\T,\y\T\right]\T\in\FF
			}\\
			=& \inf_{\x\in\R^{n_x}}
			\lrbr{
				\widehat{\phi}(\x) -\v\T\x-v_0
			},
		\end{align*}
		where the first equality is true by definition, the second holds by the definition of $\widehat{\phi}$ and \eqref{eqn:chlifting}. Considering the fact that $(\v,v_0)$ was picked arbitrarily from $\VV$,  we get that
		\begin{align*}
			\widehat{\phi}(\x)  \geq \v\T\x + v_0, \  \forall \x\in\R^{n_x}, \ \forall (\v,v_0)\in\VV,
		\end{align*}
		and consequently 
		\begin{align*}
			\widehat{\phi}(\x) \geq \sup_{(\v,v_0)\in\VV}\lrbr{\v\T\x + v_0}  = h(\x) , \  \forall \x\in\R^{n_x}.
		\end{align*}	
		
		To observe 3., let $\phi (\x_0) = -\infty$. From the first two observations, we get that $\widehat{\phi}(\x_0) = h(\x_0) = -\infty$ so that both are improper convex functions. By \cite[Theorem 7.2]{rockafellar_convex_2015} these functions are $-\infty$  everywhere except for some boundary points of their domain unless the domain is closed, in which case the function is $-\infty$ everywhere.

		Next, we prove 4. So let $(\w^*,\ggl^*)$ be a feasible solution for $\theta(\widehat{\x})$. Then, by \cref{asm:CharacterizeG} and weak duality and the prior observations, we have 
		\begin{align*}
			\phi(\x)
			&\geq  
			\sup_{\w,\ggl}
			\lrbr{
				\b\T\ggl+\x\T\w
				\colon 
				\Qb-\AA^*(\ggl)-\BB_{1}^*(\w) \in \COP
			}\\
			&\geq 
			\b\T\ggl^*+\x\T\w^*,
		\end{align*}
		where the final inequality follows since the feasible set of $\theta$ does not depend on $\x$.  Thus, the function is an affine underestimator for $\phi$, hence closed and convex. But this implies it is majorized by $h(\x)$ which is therefore proper, as point~2. implies
		$\dom h \neq \emptyset$.
		
		Finally, to prove 5.,  assume that $\theta(\x)$ has a feasible point. Then given that $\widehat{\phi}(\x) = \theta(\x)>-\infty$ for all $\x\in\R^{n_x}$, we have that $\widehat{\phi}$ is a pointwise supremum of affine functions and therefore closed. Hence, it is a closed convex function majorizing the closure of the convex envelope $h$, so that by definition the two must coincide.    
	\end{proof}
	
	This theorem implies that we can use feasible solutions of the dual $\theta$, to generate affine underestimators of $\phi$, which under mild conditions reproduce its closed convex envelope $\widehat{\phi}$. Such linear models can potentially be used to power cutting plane methods such as Benders decomposition. The following example illustrates some of the intricacies regarding the difference of the convex envelope of $\phi$, the closure of that envelope, and $\widehat{\phi}$. 
	
	\begin{exmp}
		Consider the following instance of the optimal value function 
		\begin{align*}
			\phi(x) \coloneqq& \inf_{y} \lrbr{2ax y+by^2+cy \colon fx+gy = d,\ x \geq 0, \ y\geq0}, \mbox{ so that }\\
			\widehat{\phi}(x) \coloneqq& \inf_{X,Y,Z,y} \lrbr{2aZ+bY+cy \colon 
				\begin{array}{l}
					fx+gy = d, \\
					f^2X+2fgZ+g^2Y = d^2,
				\end{array}
				\begin{bmatrix}
					1 &x & y \\
					x &X & Z \\
					y &Z & Y
				\end{bmatrix}\in\CPP(\R^3_+)
			}.
		\end{align*}
		Since the set $\FF$ of feasible $[x,y]$ is the positive orthant, $\GG(\FF)$ coincides with the constraints defining $\widehat{\phi}$ by \cref{thm:BurerCPP}.
		\begin{itemize}[leftmargin=0.2cm]
			\item First, consider the case where $f = g = d = c = 0$. This trivializes the linear constraint and eliminates the linear term in the objective. This setup makes for an easy demonstration that $\widehat{\phi}$ can be either the convex envelope, or the closed convex envelope, or neither of both. 
			\begin{itemize}[leftmargin=0.3cm]
				\item Let further $a,b\geq 0$. We find, that $\phi(x) = 0$ over its effective domain given by $\R_+$. The same is true for $\widehat{\phi}(x)$ since the conic constraint implies $x,Z,Y\geq 0$. Hence the latter function is indeed the convex envelope of the former which in this case happens to be closed. 
				
				\item  If $a= 0,\ b<0$ we would find a similar relation. Both functions are $-\infty$ across their identical effective domains (given by $\R_+$). Except, now $\widehat{\phi}(x)$ is no longer the closure of the convex envelope as that would be the function that is $-\infty$ everywhere, which is the only improper convex function that is also closed. However, incidentally, $\widehat{\phi}(x) = \phi(x)$ and the latter is its own convex envelope so that $\widehat{\phi}$ is as well.    
				
				\item Now, let $a<0,\ b = 0$, then 
				\begin{align*}
					\phi(x)	 = 
					\begin{cases}
						0 & \mbox{ if } x=0,\\
						-\infty &  \mbox{ if } x>0,\\
						\infty & \mbox{\red{otherwise}},
					\end{cases}
				\end{align*}
				but for the conic problem the ray 
				\begin{align*}
					\begin{bmatrix}
						1 & x & 0 \\
						x & x^2 & 0\\
						0 & 0 & 0 
					\end{bmatrix}+ \gl 
					\begin{bmatrix}
						0 & 0 & 0 \\
						0 & 1 & 1\\
						0 & 1 & 1 
					\end{bmatrix} \in \CPP\left(\R^{3}_+\right), \quad \gl \geq 0,
				\end{align*}
				whenever $x\geq 0$. Hence we get an improving ray, so that $\widehat{\phi}(x) = -\infty $ for $x\geq 0$ and $\infty$ otherwise. The closed convex envelope is $-\infty$ everywhere while $\phi$ is its own convex envelope so that $\widehat{\phi}$ is neither of those. 
			\end{itemize}

			\item Now let's consider  $f = g = d =  1, \ a = 0,\ c= 0$ so that, by feasibility considerations, the effective domain of both functions is given by the line segment $[0,1]$. For instance, we can see that $\widehat{\phi}(x)= \infty$ whenever $x\notin[0,1]$ since in that case the constraints $x+y=1$ cannot be fulfilled by any $x,y\geq 0$ as necessitated by the conic constraint. More so, the feasible set of the conic problem is compact, so that a dual Slater point must exist, guaranteeing strong duality, which renders $\widehat{\phi}$ to be the closed convex envelope of $\phi$ by \cref{thm:ClosedConvexEnvelope}. Also, $\phi$ becomes an optimization problem over a singleton and is thus evaluated directly as 
			\begin{align*}
				\phi(x)	 = 
				\begin{cases}
					bx^2-2bx+b & \mbox{ if } x\in[0,1],\\
					\infty & \mbox{\red{otherwise}},
				\end{cases}
			\end{align*}
                \red{where we plugged in $y=1-x$.}
			Note, that $\phi(1) = 0$, while $\phi(0) = b$. We investigate the convex and the nonconvex case separately. 
			\begin{itemize}
				[leftmargin=0.3cm]
				\item Let us set $b>0$ so that $\phi(x)$ is an overall convex function. We can evaluate $\widehat{\phi}(x)$ in the following way. Inside the effective domain $[0,1]$ we will now construct a relaxation of $\widehat{\phi}$ that turns out to be tight. To that end,
				consider the fact that a matrix is in $\CPP(\R^3_+)$ if it is doubly nonnegative, i.e.\ psd and nonnegative. This implies $Y\geq y^2  = (1-x)^2$.
				We will now consider a lower bound of $\widehat{\phi}(x)$ given by 
				\begin{align*}
					bx^2-2bx+b = \inf_{Y}\lrbr{bY \colon 
						Y\geq (1-x)^2
					}  \leq \widehat{\phi}(x)\leq \phi(x) = bx^2-2bx+b, \ \forall x\in[0,1],
				\end{align*}
				where the leftmost inequality is \cref{thm:ClosedConvexEnvelope}.1. This, together with the fact that outside the unit interval both functions are $\infty$, implies $\widehat{\phi}=\phi$ as it should since $\phi$ is closed and convex and $\widehat{\phi}$ is its closed convex envelopes by \cref{thm:ClosedConvexEnvelope}.5.

				\item So consider the case $b< 0$. First, observe the feasibility of  $Y~=~1~-~x,\ X~=~x,\ Z~=~0$, $y~=~1~-~x$. Indeed, we can use Sylvester's criterion to certify positive semidefiniteness whenever $x\in[0,1]$, and the linear constraints hold as well. The objective function value is $b(1-x)$. To certify optimality for $\widehat{\phi}$, observe that, due to the linear constraint on the lifted variables, we could increase $Y$ by $\eps_Y> 0$ only if we change $X$ to $X+\eps_X$ and $Z=0$ to $Z+\eps_Z$ with $\eps_Z\geq 0$, in which case the increase in $Y$ would be $\eps_Y = -(\eps_X+2\eps_Z)$, which implies $\eps_X< -2\eps_Z\leq0$. However, since $x$ and $y=1-x$ are fixed by constraints, there is no feasible perturbation of other entries in the matrix variable, whose determinant would then become $-(\eps_X+\eps_Z)^2<0$,
				so that the matrix could not be psd anymore. We see that 
				\begin{align*}
					\widehat{\phi}(x)	 =  
					\begin{cases}
						b(1-x) & \mbox{ if } x\in[0,1],\\
						\infty & \mbox{\red{otherwise}},
					\end{cases}
				\end{align*}
				which is precisely the largest convex function underestimating $\phi$ in that interval and it is indeed closed.   
			\end{itemize}     		
		\end{itemize}		
	\end{exmp}

	\subsection{Quadratic underestimators via exact convexification }\label{sec:Quadratic first stage cuts via exact second stage convexification}
	
	In this section, we will describe a convexified characterization of $\phi$ whose dual objective is quadratic in the first-stage variable $\x$. Hence, the so-produced underestimators will be (possibly nonconvex) quadratic functions that, as we will show in \cref{sec:Quadratic envelopes}, can exactly represent $\phi$ in the limit. The theoretical core of the discussion is the following theorem.  
	
	\begin{thm}\label{thm:Exact2ndStConvx}
		Under \cref{asm:CompactFs}, we have $\phi(\x)  = \inf_{\Pb}\lrbr{q(\Pb) \colon \Pb\in\GG_j(\x)},\ j = 1,2$ where  
		\begin{align*}
			\hspace{-1cm}
			\GG_1(\x) 
			\coloneqq& 
			\lrbr{\Pb\in\GG(\FF) 
				\colon \BB_{2}(\Pb) = (\x,\x\x\T) }, \\
			\GG_2(\x) 
			\coloneqq& 
			\lrbr{\Pb\in\GG(\FF) 
				\colon \BB_{3}(\Pb) = (\x,\x\x\T,\x\y\T) }.
		\end{align*}
		In fact, $\GG_1(\x)=\GG_2(\x)$ and both sets are unchanged in case $\Xb = \x\x\T$ is relaxed to $\diag(\Xb) = \x\circ\x$.  
	\end{thm}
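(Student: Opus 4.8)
The plan is to leverage \cref{asm:CompactFs} so that the closed convex hull in the definition of $\GG(\FF)$ becomes a \emph{finite} convex combination, and then to collapse the first-stage block by a variance argument. By \cref{prop:CompactFs} and Carathéodory's theorem, every $\Pb\in\GG(\FF)$ admits a representation
\begin{align*}
	\Pb=\sum_{\ell=1}^{K}\gl_\ell\,\z_\ell\z_\ell\T,\qquad \z_\ell=[1,\x_\ell\T,\y_\ell\T]\T,\quad [\x_\ell\T,\y_\ell\T]\T\in\FF,\quad\ggl\in\Delta_K,
\end{align*}
whence the corner entry is $x_0=\sum_\ell\gl_\ell=1$, the $\x$-block equals $\bar\x:=\sum_\ell\gl_\ell\x_\ell$, the $\Xb$-block equals $\sum_\ell\gl_\ell\x_\ell\x_\ell\T$, and the $\Zb$-block equals $\sum_\ell\gl_\ell\y_\ell\x_\ell\T$. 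First I would dispatch the trivial inclusion $\GG_2(\x)\subseteq\GG_1(\x)$ (the defining constraint of $\GG_2$ subsumes that of $\GG_1$ and pins one additional block) together with the ``$\le$'' direction of both value identities: for each $\y\in\FF(\x)$ the rank-one lift $\z\z\T$ with $\z=[1,\x\T,\y\T]\T$ lies in $\GG_2(\x)$, is feasible for $\GG_1(\x)$, and has $q(\z\z\T)$ equal to the value of the inner objective at $\y$; taking the infimum over $\y\in\FF(\x)$ yields $\inf_{\GG_1(\x)}q\le\inf_{\GG_2(\x)}q\le\phi(\x)$.

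The heart of the argument --- and the step I expect to be the main obstacle --- is to show that the constraint $\Xb=\x\x\T$ together with the $\x$-block being $\x$ forces $\x_\ell=\x$ for every index $\ell$ in the support of $\ggl$. Writing $\bar\x=\x$, the standard identity
\begin{align*}
	\Xb-\x\x\T=\sum_{\ell=1}^{K}\gl_\ell(\x_\ell-\x)(\x_\ell-\x)\T
\end{align*}
exhibits $\Xb-\x\x\T$ as a nonnegatively weighted sum of rank-one psd matrices, hence psd. The constraint $\Xb=\x\x\T$ makes this sum vanish, and since each summand is psd this already forces $\x_\ell=\x$ whenever $\gl_\ell>0$. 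The same conclusion survives under the weaker constraint $\diag(\Xb)=\x\circ\x$: it makes the diagonal of the psd matrix $\Xb-\x\x\T$ vanish, and a psd matrix with zero diagonal is the zero matrix, so $\Xb=\x\x\T$ is recovered and $\x_\ell=\x$ on the support follows as before. This is precisely why relaxing $\Xb=\x\x\T$ to $\diag(\Xb)=\x\circ\x$ leaves both $\GG_1(\x)$ and $\GG_2(\x)$ unchanged.

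With $\x_\ell=\x$ on the support in hand, the remaining claims fall out. The $\Zb$-block becomes $\sum_\ell\gl_\ell\y_\ell\x_\ell\T=(\sum_\ell\gl_\ell\y_\ell)\x\T=\y\x\T$, i.e.\ the extra defining constraint of $\GG_2(\x)$ is automatically satisfied by every $\Pb\in\GG_1(\x)$; combined with $\GG_2(\x)\subseteq\GG_1(\x)$ this gives $\GG_1(\x)=\GG_2(\x)$. For the remaining ``$\ge$'' direction, any $\Pb\in\GG_1(\x)$ now decomposes as $\Pb=\sum_\ell\gl_\ell\z_\ell\z_\ell\T$ with $\z_\ell=[1,\x\T,\y_\ell\T]\T$ and $\y_\ell\in\FF(\x)$, so that by linearity of $q$,
\begin{align*}
	q(\Pb)=\sum_{\ell=1}^{K}\gl_\ell\,q(\z_\ell\z_\ell\T)\ge\sum_{\ell=1}^{K}\gl_\ell\,\phi(\x)=\phi(\x),
\end{align*}
using $\y_\ell\in\FF(\x)$ to bound each inner objective value below by $\phi(\x)$. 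Hence $\inf_{\GG_1(\x)}q\ge\phi(\x)$, and chaining with the ``$\le$'' estimates forces $\phi(\x)=\inf_{\GG_1(\x)}q=\inf_{\GG_2(\x)}q$. The degenerate case $\x\notin\dom\phi$ (i.e.\ $\FF(\x)=\emptyset$) is consistent: the forced condition $\x_\ell=\x$ would require $\y_\ell\in\FF(\x)$, so $\GG_1(\x)=\GG_2(\x)=\emptyset$ and both sides equal $+\infty$. Throughout, compactness is exactly what supplies the finite decomposition and the attainment of $\phi$ on the compact slice $\FF(\x)$, so no closure or limiting arguments are needed.
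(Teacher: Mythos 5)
Your proof is correct and follows the same skeleton as the paper's: use compactness (via \cref{prop:CompactFs}) to write any $\Pb\in\GG(\FF)$ as a finite convex combination of lifted dyadic matrices, force the first-stage points $\x_\ell$ to coincide with $\x$ on the support of $\ggl$, and then read off both the value identities and $\GG_1(\x)=\GG_2(\x)$. The genuinely different ingredient is how you justify the forcing step, which is indeed the heart of the matter: the paper restricts to the north-west block, obtains the identity \eqref{eqn:extremepoints}, and invokes extremality of the rays generated by dyadic matrices in the psd cone to conclude that each summand $\gl_i\,[1,\x_i\T]\T[1,\x_i\T]$ is a nonnegative multiple of $[1,\u\T]\T[1,\u\T]$, whence $\x_i=\u$; you instead use the second-moment (variance) identity $\Xb-\x\x\T=\sum_{\ell}\gl_\ell(\x_\ell-\x)(\x_\ell-\x)\T$, which is more elementary, self-contained, and has the added benefit of handling the relaxation $\diag(\Xb)=\x\circ\x$ within the same computation (the paper treats that part separately, via Schur complementation of $\Pb\succeq 0$ together with the fact that a psd matrix with zero diagonal vanishes --- an argument you also reproduce). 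A second, smaller difference: the paper derives the value identity by proving the set identity $\GG(\FF')=\GG_1(\x)$ for $\FF'=\lrbr{[\x\T,\y\T]\T\in\FF\colon \x=\u}$ and then re-invoking the lifting equality \eqref{eqn:chlifting}, whereas you bound $q(\Pb)$ directly through the decomposition using linearity of $q$, which avoids that reduction. Two cosmetic remarks: attainment of $\phi$ on the slice $\FF(\x)$ is never actually needed (your inequality chain only uses the infimum, so the final sentence of your proposal slightly oversells what compactness must deliver), and your explicit treatment of the degenerate case $\FF(\x)=\emptyset$ is a point the paper leaves implicit.
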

	\begin{proof}
		We can always rewrite $\phi(\u)  = \inf_{\x,\y}\lrbr{f(\x,\y) \colon [\x\T,\y\T]\in\FF,\ \x = \u}$ so that by \eqref{eqn:chlifting} the first part of the theorem will follow if we can show that $\GG(\FF') = \lrbr{\Pb\in\GG(\FF) \colon \x = \u, \ \Xb = \u\u\T}$, where $\FF' = \lrbr{[\x\T,\y\T]\in\FF \colon \x = \u}$. We have $\GG(\FF')\subseteq\GG_1(\x)$ since any extreme point of the former set is of the form $\Pb = \z\z\T,\ \z = [1,\x\T,\y\T]\in\lrbr{1}\times\FF'$, so that $\Pb\in\GG(\FF')$ and the containment follows from the convexity of both sets. For the converse, from \cref{prop:CompactFs} and the definition of $\GG(\FF)$ we have $\Pb = \sum_{i=1}^{k}\gl_i\z_i\z_i\T,\ \z_i = [1,\x_i\T,\y_i\T]\in\lrbr{1}\times\FF,\ i \in\irg{1}{k}, \ \ggl\in\Delta_k$ so that $\Xb = \u\u\T, \ \x = \u$, which yields that
		\begin{align}\label{eqn:extremepoints}
			\hspace{-0.9cm}
			\begin{bmatrix}
				1 & \u\T \\ \u & \u\u\T
			\end{bmatrix}
			=             
			\sum_{i=1}^k
			\gl_i
			\begin{bmatrix}
				1 \\ \x_i
			\end{bmatrix}
			\begin{bmatrix}
				1 \\ \x_i
			\end{bmatrix}\T
			\implies
			\gl_i
			\begin{bmatrix}
				1 \\ \x_i
			\end{bmatrix}
			\begin{bmatrix}
				1 \\ \x_i
			\end{bmatrix}\T  = 
			\gd_i 
			\begin{bmatrix}
				1 \\ \u
			\end{bmatrix}
			\begin{bmatrix}
				1 \\ \u
			\end{bmatrix}\T,\, \gd_i \in\R_+, \, i\in\irg{1}{k}\, ,
		\end{align}
		by extremality of the rays generated by dyadic matrices in $\SS^{n_0}_+$ (see e.g. the proof of \cite[Proposition 11]{bomze_optimization_2023}). If $\gl_i = 0$ the respective term can be dropped from the sum so that we can assume $\gl_i>0$. Then \eqref{eqn:extremepoints} implies $\gd_i = \gl_i>0$ (from the north-west entry of the final equation) which further implies $\x_i = \u$ so that $\z_i\in\lrbr{1}\times \FF'$ as desired. Clearly, $\GG(\FF')\subseteq \GG_2(\x)$ by a similar argument as before (note that the constraint is linear in $\Pb$, since $\x$ is a constant) and $\GG_2(\x)\subseteq \GG_1(\x)$ by construction so that the two sets must coincide. Finally, said relaxation is tight since $\GG(\FF)\subseteq\SS_+^{n_x+n_y+1}$ so that by Schur complementation and the relaxed constraint we have $\Ob = \Xb -\x\x\T\in\SS^{n_x}$ since the zero matrix is the only psd with diagonal equal to zero. Hence, $\Xb = \x\x\T$ as desired. 
	\end{proof}
    \blue{
    \begin{rem}
        The equality $\GG_1(\x) = \GG_2(\x)$ shows that the conditions in the former set are enough to force $\Yb= \x\y\T$. This is a useful insight, which is for example used in~\cite{concavetent}. 
    \end{rem}
    }
 
	Under \cref{asm:CharacterizeG} the dual of the problem over $\GG_1(\x)$ is given by  
	\begin{align}\label{eqn:DualforQuadraticCuts}
		\sup_{\Wb,\w,\ggl}
		\lrbr{
			\b\T\ggl+\x\T\w+\x\x\T\bullet\Wb
			\colon 
			\Qb-\AA^*(\ggl)-\BB_{2}^*(\w,\Wb)\in \COP
		}, 
	\end{align}
	so that if we can close the duality gap, e.g.\ under \cref{asm:CompactFs}, we get a dual characterization of $\phi$. If the constraint $\Xb=\x\x\T$ is replaced with $\diag(\Xb) = \x\circ\x$ in the primal, then the dual variable $\Wb$ would be restricted to be a diagonal matrix. 
	
	We see that feasible solutions to \eqref{eqn:DualforQuadraticCuts} yield underestimators of $\phi$ that are quadratic in $\x$, where the convexity hinges on the value for $\Wb$.  It is therefore vital to understand whether it is possible to find a convex underestimator among the dual solutions. To this end, we will now study quadratic underestimators more closely.  
	
	\subsubsection{Quadratic envelopes}\label{sec:Quadratic envelopes}
	We will now investigate the properties of quadratic underestimators of $\phi$ and show that, in a certain sense, all such underestimators, convex or not, can be found among the solutions of \eqref{eqn:DualforQuadraticCuts}. A quadratic function $\widehat{q}$ underestimates $\phi$ if and only if 
	\begin{align}\label{eqn:QuadraticUnderEst}
		\phi(\x)\geq \widehat{q}(\x), \quad \forall \x\in\R^n\, .
	\end{align}
	Any quadratic function can be cast in the form $\w\T\x+\Wb\bullet\x\x\T+\nu$, where the choice of the coefficients denomination was purposeful as will be apparent shortly. Moreover, for $\Pb = \z\z\T$ with $\z\T = [1,\x\T,\y\T]$ we have 
	\begin{align*}
		\w\T\x+\Wb\bullet\x\x\T= \langle (\w,\Wb), \BB_{2}(\Pb) \rangle =  \langle \BB_{2}^*(\w,\Wb), \Pb \rangle = \BB_{2}^*(\w,\Wb)\bullet\Pb\, .
	\end{align*}
	With this in mind, we can now show that the relation between $\phi$ and their quadratic underestimators is almost mediated entirely by the dually feasible set of \eqref{eqn:DualforQuadraticCuts}. 
	\begin{thm}\label{thm:COPfeasibleQuadUEs}  
		Given \cref{asm:CharacterizeG}, the function 
		$\w\T\x+\Wb\bullet\x\x\T+\nu$  is a quadratic underestimator of $\phi$, if there are $\ggl^\ell\in\R^{m}, \ \ell \in\N$ such that $\ggl = \lim\limits_{\ell\rightarrow\infty}\left(\ggl^\ell\right)$ and 
		\begin{align*}
			&\b\T\ggl \geq \nu,\quad 
			\Qb-\AA^*(\ggl^\ell)-\BB_{2}^*(\w,\Wb) \in \COP, \ \ell\in\N\, .
		\end{align*}
		In addition, if the problem 
		\begin{align*}
			\inf_{\Pb\in\SS^{n}} 
			\lrbr{  
				q(\Pb)-\BB_{2}^*(\w,\Wb)\bullet\Pb \colon \AA(\Pb) =\b,\ \Pb\in\CPP
			},
		\end{align*}
		exhibits no duality gap (for instance under~\cref{asm:CompactFs}), the above condition is also necessary, and if the dual attains its optimal value it is necessary and sufficient that there is $\ggl$ such that    
		\begin{align*}
			\b\T\ggl \geq \nu,\quad  \Qb-\AA^*(\ggl)-\BB_{2}^*(\w,\Wb)\in \COP \, .
		\end{align*}
	\end{thm}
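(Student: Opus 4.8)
The plan is to read the three displayed conditions as weak- and strong-duality relations for the single copositive program in the ``in addition'' clause, and to transport them to $\phi$ through the dyadic generators of $\GG(\FF)$. Write $\Qb' \coloneqq \Qb - \BB_{2}^*(\w,\Wb)$, so that the primal reads $p^* = \inf_\Pb\lrbr{\Qb'\bullet\Pb \colon \AA(\Pb) = \b,\ \Pb\in\CPP}$ with feasible set $\GG(\FF)$ by \cref{asm:CharacterizeG}, and Lagrangian dual $d^* = \sup_\ggl\lrbr{\b\T\ggl \colon \Qb' - \AA^*(\ggl)\in\COP}$, which is exactly the program whose feasibility the theorem invokes. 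For any generator $\Pb = \z\z\T$ with $\z\T = [1,\x\T,\y\T]$ and $[\x\T,\y\T]\in\FF$ I will repeatedly use the three identities $q(\Pb) = f(\x,\y)$, $\BB_{2}^*(\w,\Wb)\bullet\Pb = \w\T\x + \Wb\bullet\x\x\T$ (recorded just before the theorem, since $\BB_{2}(\z\z\T) = (\x,\x\x\T)$), and $\AA^*(\ggl)\bullet\Pb = \ggl\T\AA(\Pb) = \b\T\ggl$, the last because $\z\z\T\in\GG(\FF)$.

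For sufficiency I argue pointwise, staying under \cref{asm:CharacterizeG} alone. Fix $\ell$ and a generator $\z\z\T$. Since $\z\z\T\in\CPP$, $\Qb' - \AA^*(\ggl^\ell)\in\COP$, and $\COP$ is dual to $\CPP$, the three identities give $0\leq(\Qb' - \AA^*(\ggl^\ell))\bullet\z\z\T = f(\x,\y) - \w\T\x - \Wb\bullet\x\x\T - \b\T\ggl^\ell$. Sending $\ell\to\infty$ and using $\b\T\ggl^\ell\to\b\T\ggl\geq\nu$ yields $f(\x,\y)\geq\w\T\x + \Wb\bullet\x\x\T + \nu$ for every $[\x\T,\y\T]\in\FF$; taking the infimum over $\y\in\FF(\x)$ gives $\phi(\x)\geq\w\T\x + \Wb\bullet\x\x\T + \nu$, which is trivial off $\dom\phi$. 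As $\COP$ is closed and $\AA^*$ continuous, the limit $\ggl$ is itself dual feasible, so this step is really weak duality for $p^*$ and the approximating sequence is merely a notational convenience.

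For necessity under no duality gap I first convert the underestimation into $p^*\geq\nu$. Because $\Qb'\bullet\Pb$ is linear and continuous while $\GG(\FF) = \cl\conv\lrbr{\z\z\T \colon [\x\T,\y\T]\in\FF}$, the primal infimum equals the infimum over the generators; and on each generator the value is $f(\x,\y) - \w\T\x - \Wb\bullet\x\x\T \geq \phi(\x) - \w\T\x - \Wb\bullet\x\x\T \geq \nu$, using $f(\x,\y)\geq\phi(\x)$ (as $\y\in\FF(\x)$) and the hypothesis~\eqref{eqn:QuadraticUnderEst}. Hence $p^*\geq\nu$, and absence of a gap upgrades this to $d^* = p^*\geq\nu$. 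Choosing feasible $\ggl^\ell$ with $\b\T\ggl^\ell\to d^*\geq\nu$ produces the required sequence; if the dual optimum is attained, the constant sequence at a maximizer gives the sharper equivalence with a single $\ggl$ obeying $\b\T\ggl\geq\nu$ and $\Qb' - \AA^*(\ggl)\in\COP$. Under \cref{asm:CompactFs}, the no-gap input is the dual Slater point of~\eqref{eqn:Slater} applied to $\Qb'$, and the identity $\inf_\x[\phi(\x) - \w\T\x - \Wb\bullet\x\x\T] = p^*$ can alternatively be read off \cref{thm:Exact2ndStConvx}, since the generators of $\GG(\FF)$ already populate $\bigcup_\x\GG_1(\x)$.

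The step I expect to be most delicate is the very last one in the necessity argument: passing from $d^* = p^*\geq\nu$ to a \emph{convergent} feasible sequence whose limit $\ggl$ satisfies $\b\T\ggl\geq\nu$. When $d^* = \nu$ but the supremum is not attained, a maximizing sequence may fail to converge (it can recede to infinity), while closedness of the feasible set would otherwise force attainment; this is precisely why the statement separates the limit form, which is safe whenever $d^*>\nu$ or the dual is attained, from the attainment form that yields a single multiplier. The remaining ingredients, namely the reduction of $p^*$ to an infimum over generators and the strong-duality input, are routine given \cref{asm:CharacterizeG}, \cref{prop:CompactFs}, and~\eqref{eqn:Slater}.
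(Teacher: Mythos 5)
Your proposal is correct and essentially reproduces the paper's argument: the paper likewise rewrites the underestimation condition $\phi(\x)\geq \w\T\x+\Wb\bullet\x\x\T+\nu$ as nonnegativity of $\inf_{\x}\lrbr{\phi(\x)-\w\T\x-\Wb\bullet\x\x\T}-\nu$, identifies this infimum with the copositive primal via \cref{asm:CharacterizeG} and \eqref{eqn:chlifting} (your pointwise evaluation on the dyadic generators is just this identification plus weak duality made explicit), and then obtains sufficiency from weak duality, necessity from the vanishing duality gap, and the single-multiplier form from dual attainment. Your closing caveat --- that a maximizing dual sequence need not converge when the dual value equals $\nu$ without attainment, whereas closedness of $\COP$ forces any \emph{convergent} feasible sequence to have a feasible limit, collapsing the sequential condition into the attainment one --- is a legitimate observation about the sequential formulation that the paper's own proof (``a consequence of the definition of a supremum, in terms of the limit of a feasible improving sequence'') glosses over in exactly the same way, so it marks a weak spot shared with the original rather than a defect of your argument.
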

	\begin{proof}
		Condition (\ref{eqn:QuadraticUnderEst}) can be equivalently stated as 
		\begin{align*}
			\phi(\x)\geq& \w\T\x+\Wb\bullet\x\x\T+\nu \quad \forall \x\in\R^n,\\
			\Longleftrightarrow \quad
			0\leq& 
			\inf_{\x\in\R^n}
			\lrbr{
				\phi(\x)- \w\T\x-\Wb\bullet\x\x\T
			}
			-\nu, \\					
			=&\inf_{\Pb\in\SS^{n}} 
			\lrbr{  
				q(\Pb)-\BB_{2}^*(\w,\Wb)\bullet\Pb \colon \AA(\Pb) =\b,\ \Pb\in\CPP
			}- \nu \\
			\Longleftarrow \quad
			0\leq&\sup_{\ggl}
			\lrbr{
				\b\T\ggl 
				\colon 
				\Qb-\AA^*(\ggl)-\BB_{2}^*(\w,\Wb)\in \COP
			}-\nu.
		\end{align*}
		For the first implication we merely used the definitions of an infimum and $\phi$. The equality is a consequence of \cref{asm:CharacterizeG} and \eqref{eqn:chlifting}. The second one-sided implication is a consequence of weak duality, with the reverse implication being valid in case the duality gap vanishes. 
		The theorem then is a consequence of the definition of a supremum, in terms of the limit (as $\ell\to\infty$) of a feasible improving sequence, while under strong duality the supremum is in fact attained. 
	\end{proof}
	
	This theorem tells us that by searching the dual feasible set in \eqref{eqn:DualforQuadraticCuts} we may find all the quadratic underestimators of $\phi$, convex or not, at least in the limit. Thus, if a convex quadratic underestimator for $\phi$ exists at a point $\x$, we may find it by solving the dual under additional positive semidefiniteness constraints on $\Wb$. One challenge we seek to meet in future research is the question of how to do this efficiently. For now, we leave the reader with some illustrative examples.

	\begin{exmp}\label{exmp:QuadraticExample}
		For this example, we are investigating 
		\begin{align*}
			\phi(\x) 
			\coloneqq 
			\inf_{\y}
			\lrbr{
				2\x\T\Bb\y + \y\T\Cb\y \colon \e\T\x+\e\T\y = 1,\ \y\in\R^{n_y}_+
			},
		\end{align*}
		which by \cref{thm:Exact2ndStConvx} can be exactly represented as 
		\begin{align*}
			\hspace{-1cm}
			\phi(\x)
			=& 
			\inf_{\Yb,\Zb,\y}
			\lrbr{
				2\Bb\bullet\Zb+\Cb\bullet\Yb \colon 
				\begin{array}{l}
					\e\T\x+\e\T\y = 1,\\
					\Eb\bullet\x\x\T+2\Eb\bullet\Zb+\Eb\bullet\Yb =1,\\
				\end{array}	
				\begin{bmatrix}
					1 & \x\T& \y\T \\
					\x& \x\x\T & \Zb\T\\
					\y& \Zb & \Yb 
				\end{bmatrix}\in\CPP(\R_+^n)
			}\\
			=&
			\sup_{\ggl,\gl_0,\w,\Wb}
			\lrbr{
				\e\T\ggl-\gl_0 + \w\T\x + \x\T\Wb\x
				\colon 
				\begin{bmatrix}
					\gl_0 & -\tfrac{1}{2}\left(\w+\gl_1\e\right)\T & -\tfrac{1}{2}\gl_2\e\T\\
					-\tfrac{1}{2}\left(\w+\gl_1\e\right) & -\left(\Wb+\gl_2\Eb\right)& \left(\Bb-\gl_2\Eb\right)\T \\
					-\tfrac{1}{2}\gl_2\e & \Bb-\gl_2\Eb & \Cb-\gl_2\Eb
				\end{bmatrix}\in\COP(\R_+^n)
			}.	
		\end{align*}
		Setting $n_x=n_y=2$ we randomly generated a random positive semidefinite matrix $\Cb$ with entries given by $1.5566, 0.5781,  0.2557$ and $\Bb$ was set to the identity matrix times minus one. We used this configuration to create the graphs in \cref{fig:QuadraticUnderestimators}. \begin{figure}[h]
			\hspace{-1.5cm}
			\begin{tabular}{cccc}
				\includegraphics[scale= 0.245]{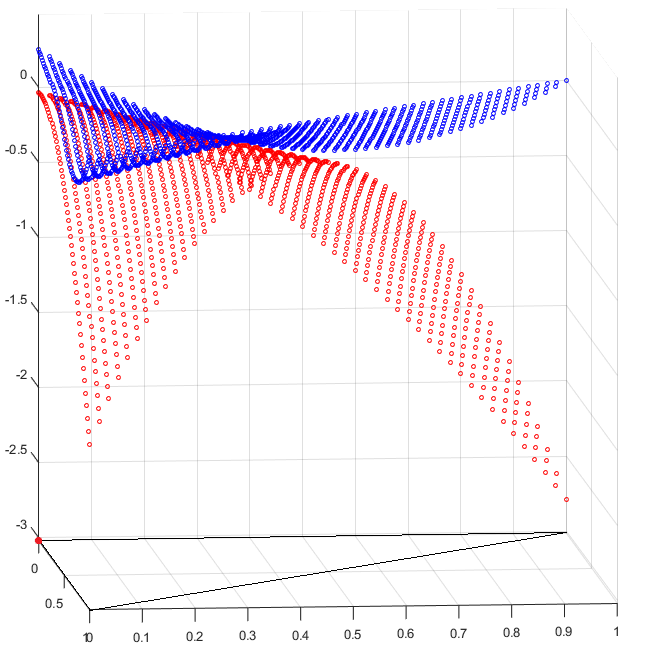} &
				\includegraphics[scale= 0.23]{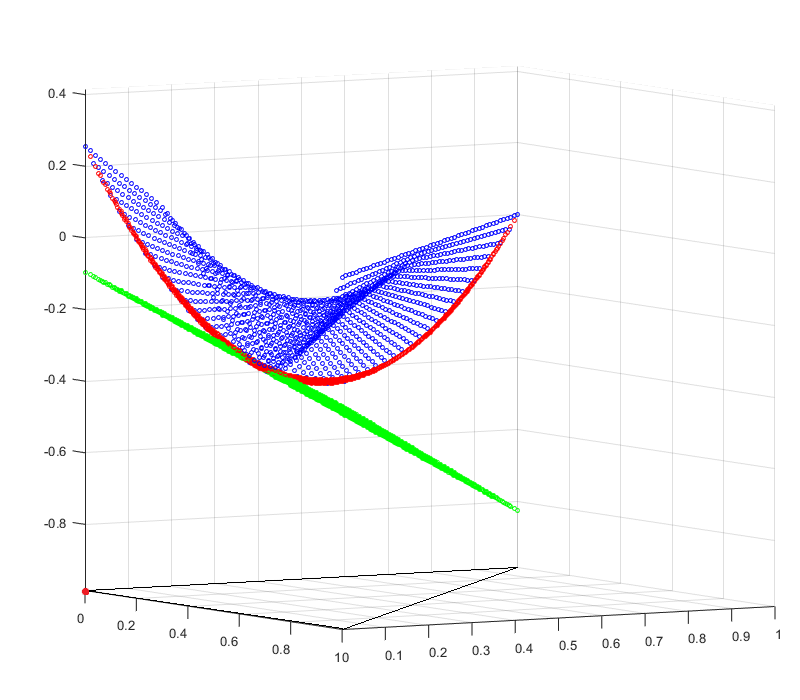} &   
				\includegraphics[scale= 0.23]{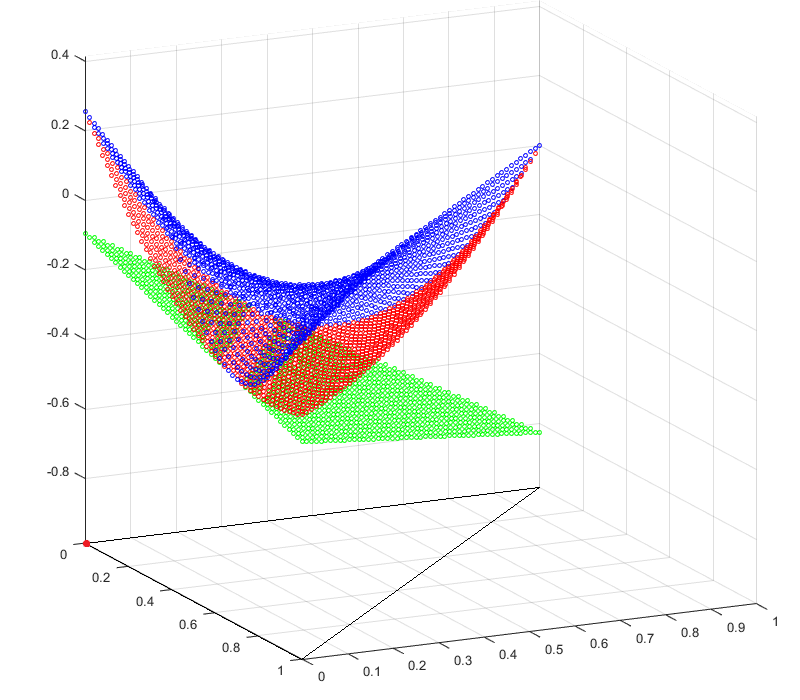} &
				\includegraphics[scale= 0.23]{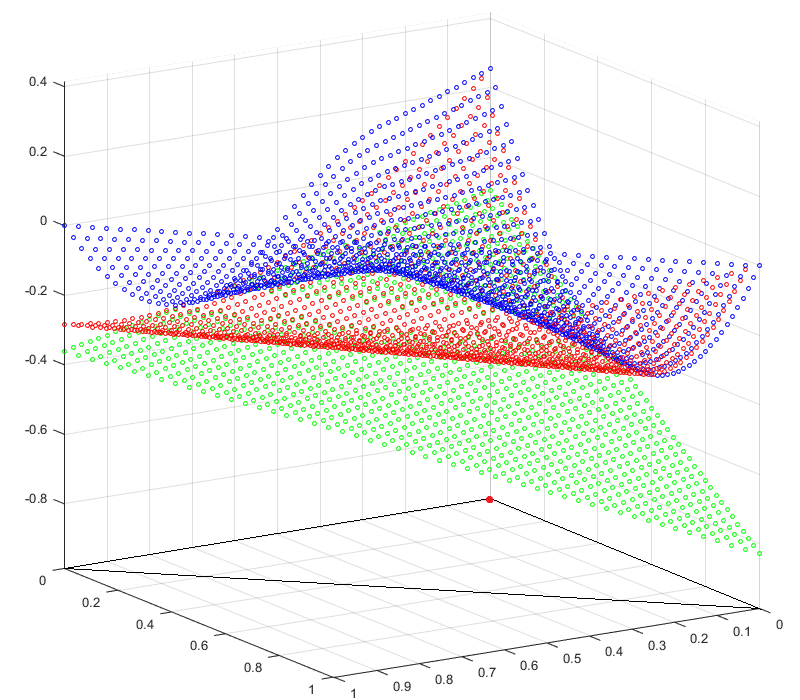}
			\end{tabular}
			\caption{An optimal value function with an affine, convex, and nonconvex quadratic underestimator}
			\label{fig:QuadraticUnderestimators}
		\end{figure}
		
		Firstly, in all the pictures the blue surface represents $\phi$ evaluated across its effective domain, which is $\conv(\lrbr{\oo}\cup\Delta_2)$. This set is indicated in all the pictures as a black triangle, the origin marked by a red dot. We can see that the function $\phi$ is highly nonconvex. In the center, it even seems to be non-smooth, but we did not confirm this analytically. All other surfaces are underestimators constructed at an approximation point $\x_1 = \tfrac{1}{4}[1,1]\T$. 
		
		In the first picture, we solved the exact dual representation of $\phi$, and used the optimal values for the dual variables to parametrize the quadratic underestimator depicted in red. We see that it is nonconvex, and the curvature of the blue surface suggests that there are no convex quadratic underestimators that agree with $\phi$ at $\x_1$. 
		
		For the other three pictures, we first evaluated the closed convex envelope of $\phi$ at $\x_1$ via the dual of $\widehat{\phi}$ as described in \cref{sec:Underestimating via closed convex envelopes}. From the dual solution, we were able to generate the affine underestimator depicted in green. This function supports the closed convex envelope at $\x_1$ but differs from $\phi$ at that point since that function is nonconvex around $\x_1$. 
		
		Finally, we also generated a convex quadratic underestimator depicted in red in the second three pictures in the following way. We solved a version of the exact dual representation of $\phi$, where we fixed the objective to $\widehat{\phi}(\x_1)$, i.e. the value of the closed convex envelope of $\phi$ at $\x_1$, via an additional constraint and we also added the constraint $\Wb\in\SS_+^{n_x}$. The objective was replaced with $\x_2\T\Wb\x_2$ where $\x_2 = \tfrac{1}{2}[1,1]\T$. Thus, we were searching among all quadratic underestimators (i.e. in the feasible set of the dual representation) that are also convex and that support the closed convex envelope at $\x_1$ for the one that curves upwards in the direction of $\x_2$ the most. We see in the pictures that the so-obtained convex quadratic underestimator tracks the convex envelope of $\phi$ much closer than the affine one. 
	\end{exmp}
	
	\begin{exmp}\label{exmp:QuadraticExample2}
		We will now investigate the quadratic underestimators of $\phi$ as described in \cref{exmp:QuadraticExample} but with $n_x=n_y=1$, so that 
		$$
		\phi(x) = 
		\inf_{y\in\R}
		\lrbr{
			2Bx y+Cy^2
			\colon
			\begin{array}{l}
				x+y = 1,\ \\ 
				x\in\R_+,\ y\in\R_+ 
			\end{array}            
		}
		= 
		\begin{cases}
			(C-2B)x^2+2(B-C)x+C    & \mbox{if } x\in[0,1],\\
			\infty & \mbox{\red{otherwise}.}
		\end{cases}
		$$ 
		Also in this case we have $\FF = \lrbr{[x,y]\T\in\R^2_+\colon x+y = 1 }$. 
		
		In this example, we will also consider a naive alternative to the reformulations derived in \cref{sec:Quadratic first stage cuts via exact second stage convexification}. We can interpret $\phi(x)$ as a quadratic optimization problem in $y$ over the feasible set $\FF(x)\coloneqq \lrbr{y\in\R_+ \colon y =1-x}$ so that by \cref{thm:BurerCPP} we can equivalently reformulate it as 
		\begin{align*}
			\phi(x) 
			& = 
			\inf_{Y,y}
			\lrbr{
				CY+2Byx 
				\colon 
				\begin{array}{l}
					y\hspace{0.1cm} = 1-x, \\
					Y = (1-x)^2, \
				\end{array}				
				\begin{bmatrix}
					1  &y\\					
					y  &Y
				\end{bmatrix}\in\CPP(\R_+^2)
			}.
		\end{align*}
		Hence, we get an alternative reformulation to the one based on \cref{thm:Exact2ndStConvx} depicted in \cref{exmp:QuadraticExample}, which in the present case would be given by 
		\begin{align*}
			\phi(x)
			\coloneqq& 
			\inf_{Y,Z,y}
			\lrbr{
				CY+2BZ\colon 
				\begin{array}{l}
					x+y= 1,\\
					x^2+2Z+Y=1,					
				\end{array}	
				\begin{bmatrix}
					1  & x& y \\
					x& x^2& Z\\
					y & Z & y
				\end{bmatrix}\in\CPP(\R_+^3)
			}.	
		\end{align*}
		The duals of the two representations of $\phi$ are given by
		\begin{align*}
			\phi(x)
			=&\sup_{w,W,\gl_0,\ggl}
			\lrbr{ \gl_1+\gl_2-\gl_0+wx+Wx^2  
				\colon 
				\begin{bmatrix}
					\gl_0 & -\tfrac{1}{2}(w+\gl_1) & -\tfrac{1}{2}\gl_1\\
					-\tfrac{1}{2}(w+\gl_1) & -(W+\gl_2) & B-\gl_2\\
					-\tfrac{1}{2}\gl_1 & B-\gl_2 & C-\gl_2
				\end{bmatrix}\in\COP(\R_+^3)
			}, \\		
			=&
			\sup_{\gl_0,\gl_1,\gl_2}
			\lrbr{
				\gl_1(1-x)+\gl_2(1-x)^2-\gl_0
				\colon
				\begin{bmatrix}
					\gl_0 & Bx-\tfrac{1}{2}\gl_1\\
					Bx-\tfrac{1}{2}\gl_1& C-\gl_2
				\end{bmatrix}\in\COP(\R_+^2)
			},
		\end{align*}
		where both representations are equal to $\phi(x)$ since both its primal representations have a compact feasible set and hence a dual Slater point.
		
		Now we investigate solutions of these duals for $B=0,\ C= 1,\ x=\sfrac{1}{2}$. In this case $\phi(x) = (1-x)^2$, so that $\phi(\sfrac{1}{2}) = \sfrac{1}{4}$ and for the two duals of $\phi$ this optimal value is attained at 
		\begin{align*}
			\left(\gl_0,\gl_1,\gl_2,w,W\right) = \left(\tfrac{1}{4},\tfrac{3}{4},\tfrac{1}{8},-\tfrac{1}{2},-\tfrac{1}{2}\right), \
			\left(\gl_0,\gl_1,\gl_2\right) = \left(0,0,1\right),
		\end{align*} 
		where feasibility is easily checked by the respective matrix blocks being positive semidefinite. The quadratic functions in $x$ obtained from the objective functions after plugging in these values are $\tfrac{7}{8}-\tfrac{1}{2}x-\tfrac{1}{2}x^2$, and $(1-x)^2$ respectively. Both of these equal $\phi\left(\sfrac{1}{2}\right) = \tfrac{1}{4}$ at $x = \sfrac{1}{2}$ and both functions underestimate $\phi$ on the unit interval. However, we see that only the second one equates to $(1-x)^2$ which would be the best quadratic underestimator of $\phi$ over its effective domain, in the sense that it is an exact representation, while the other underestimator is not even convex. 
		
		However, this optimal solution of the full model is not unique. For example, setting
		\begin{align*}
			\hspace{-1cm}
			\left(\gl_0,\gl_1,\gl_2,w,W\right) =
			\left(s,2s+1,-(s+\tfrac{1}{4s}),-2,1\right), \mbox{ for any }s>0,
		\end{align*}   
		we can show that the the resulting matrix block $\Mb(s)-\Nb\in\SS_+^3$ for all $s>0$, where $\Nb$ is a nonnegative matrix with zero entries except for $(\Nb)_{23} = \tfrac{1}{2s}$ so that $\Mb(s)\in\SS_+^3+\NN^3 = \COP(\R_+^3)$ and the objective is $2s+1-(s+\sfrac{1}{4s})-s-2x+x^2 = 1+\sfrac{1}{4s}-2x+x^2\rightarrow 1-2x+x^2= (1-x)^2$ as $s\rightarrow \infty$. Indeed positive semidefiniteness of 
		\begin{align*}
			\Mb(s)-\Nb = 
			\begin{bmatrix}
				s  &  -s  &  -s \\
				-s &s+\tfrac{1}{4s}&s-\tfrac{1}{4s}\\
				-s &s-\tfrac{1}{4s}&s+\tfrac{1}{4s}
			\end{bmatrix},
		\end{align*} 
		can be inferred by checking all principal minors. The determinant is zero as the first row times $-2$ gives the sum of the other two, the other principal minors are easily calculated and in fact nonnegative for $s>0$. This illustrates the sufficiency part of the first statement in \cref{thm:COPfeasibleQuadUEs}. Numerical experiments suggest that there is no feasible solution where the first dual attains $(1-x)^2$, but we do not bother proving this here. 
		
		Interestingly, the smaller dual can produce $(1-x)^2$, which would suggest, that the smaller model is preferable. However, this is not always the case. For example, had we chosen $C<0$, then any feasible solution necessarily has $\gl_2<0$ so that only nonconvex quadratic underestimators are obtainable. In contrast, to full model is always able to retrieve all possible quadratic understimators in the limit by \cref{thm:COPfeasibleQuadUEs}.

		If in addition, bilinear terms are present, i.e.\ $B\neq0$, then another deficiency becomes apparent. For example in case $B = C = -1$, so that $\phi(x) = x^2-1$, the dual will only ever produce concave quadratic functions. But more importantly, these will not necessarily be underestimators of $\phi$. For example, if we evaluate the dual at $x=\sfrac{1}{2}$, where $\phi(\sfrac{1}{2})= -\sfrac{3}{4}$, it is easy to see that $\gl_0 = 0, \ \gl_1 = -1, \gl_2 = -1$ is an optimal solution, with optimal value given by $-\sfrac{3}{4}$. But the implied quadratic function is given by $-2+3x-x^2$ which overestimates $x^2-1$ on the interval $[\sfrac{1}{2},1]$. The reason for this pathology is the fact that the reduced duals feasible set depends on $x$, so the dual solutions are not feasible for all $x$.
	\end{exmp}
	
    \blue{
    \section{An application example}
		In this section, we will give an example of how to use the theory outlined in this text for numerical purposes. We will devise a Benders-type scheme to tackle a certain nonconvex QCQP, allowing for parallelization of some of the procedures. The so-obtained lower bounds may demonstrate that our results can have significant computational benefits. However, since this is not the main focus of the text, we will not discuss results on whether the algorithmic scheme converges, nor will we consider real-life instances, but merely artificial ones.  
		
		\subsection{The model}
		We will consider the following optimization problem 
		\begin{align}\label{eqn:TheModel}
			v^*\coloneqq \min_{\x\in\R_+^{Sn},\ \x_i\in\R_+^n}
			\lrbr{
				\x\T\Ab\x +2 \a\T\x + \sum_{i=1}^{S}\x _i\T\Ab_i\x _i
				\colon 	
				\begin{array}{l}					
					\x _i\T\Qb_{ij}\x _i= 1 ,\
					j \in\irg{1}{m},\ i\in\irg{1}{S},\\
					\x\T\x\leq r,\ 
					\x = 
					\begin{bmatrix}
						\x _1\T,&\dots,&\x _S\T
					\end{bmatrix}\T
				\end{array}				
			},			
		\end{align}
		where $\Ab\in\SS_+^{Sn},\ \Ab_i\in\SS^{n}, \ i\in\irg{1}{S},\ \Qb_{ij} \in\SS_+^n,\ j\in\irg{1}{m},\ i\in\irg{1}{S}$ and $\a\in\R^{Sn}$. Such a structure may arise if we have a QCQP where the nonconvex part can be separated and decomposed into $S$ separate blocks of variables and constraints. The model can be reformulated into a two-stage form 
		\begin{align*}
			&\min_{\x\in\R^{Sn},\ \x _i\in\R^n}
			\lrbr{
				\x\T\Ab\x +2 \a\T\x + \sum_{i=1}^{S}\phi_i(\x_i)
				\colon 	
				\begin{array}{l}									
					\x\T\x\leq r,
					\x = 
					\begin{bmatrix}
						\x _1\T,&\dots,&\x _S\T
					\end{bmatrix}\T
				\end{array}				
			},			
			\\
			&\phi_i(\x_i)
			= 
			\begin{cases}
				\x_i\T\Ab_i\x_i & \mbox{if } \x_i\in\FF_i\\
				\infty & \mbox{otherwise}
			\end{cases},
			\quad 
			\FF_i\coloneqq 
			\lrbr{
				\x_i\in\R_+^n
				\colon 
				\x _i\T\Qb_{ij}\x _i= 1,\ \x_i\T\x_i\leq r 
			},\quad 
			\forall i\in\irg{1}{S},
		\end{align*}
		and by \cref{thm:ClosedConvexEnvelope} the closed convex envelopes of $\phi_i,\ i\in\irg{1}{S}$ are given by 
		\begin{align*}
			\hat{\phi}_i(\x_i)
			= 
			\min_{\Xb_i\in\SS^n}
			\lrbr{
				\Ab_i\bullet\Xb_i
				\colon 
				(\x_i,\Xb_i)\coloneqq 
				\begin{bmatrix}
					1 & \x_i\T \\ \x_i & \Xb_i
				\end{bmatrix}\in\GG(\FF_i)
			}, \quad i\in\irg{1}{S},
		\end{align*}
		where the shorthand $(\x_i,\Xb_i)$ for the matrix block will be used throughout this section.   
		We see that we can obtain a lower bound $v_{rel}^*$ on $v^*$ by solving the convex relaxation obtained by replacing $\phi_i$ with $\hat{\phi}_i, \ i \in\irg{1}{S}$ and the remainder of this section will be concerned with calculating this lower bound by using a Bender type strategy.   
		
		Before proceeding we would like to point out a couple of noteworthy things. Firstly, we chose not to have a second stage variable here to keep the exposition simple and to demonstrate that a "natural" two-stage form is not necessary for our underestimators to be applicable. Secondly, in the construction of $\FF_i, \ i \in\irg{1}{S}$, we added a ball constraint which is redundant for the overall problem; however, its presence ensures the compactness of the sets. As a consequence, \cref{asm:CompactFs} is fulfilled and $\hat{\phi}_i, \ i\in\irg{1}{S}$ are indeed exactly the closed convex envelopes, their defining convex optimization problems enjoy zero duality gap and, finally, $\dom(\hat{\phi}) = \cl\conv\dom\phi = \conv\FF_i$ by \cref{thm:ClosedConvexEnvelope}.  We will now proceed with presenting a solution scheme for the convex relaxation.

		\subsection{The method}
		We start out by rewriting $\phi_i$ as follows. First, define $\KK_i\coloneqq \mathrm{cone} (\lrbr{1}\times\FF_i), \ i\in\irg{1}{S},$ which is a pointed cone that has first canonical basis vector $\e_1\in\interior\KK^*$ (its dual cone), by construction due to the compactness of $\FF$.  
		Then an elementary argument shows that for any $i\in\irg{1}{S}$ we have: 
		\begin{align*}
                \hspace{-0.5cm}
			\hat{\phi}_i(\x_i)
			= 
			\min_{\Xb_i\in\SS^n}
			\lrbr{
				\Ab_i\bullet\Xb_i
				\colon 
				(\x_i,\Xb_i)\in
				\CPP(\KK_i)
			}
			= 
			\sup_{\ga\in\R,\w\in\R^n}
			\lrbr{
				-\ga-2\x_i\T\w \colon 
				\begin{bmatrix}
					\ga & \w\T\\ \w & \Ab_i
				\end{bmatrix}\in\COP(\KK_i)	
			}. 		
		\end{align*} 
		Define the set of feasible $(\ga,\w)$ as $\DD_i, i \in\irg{1}{S}$, then 
		\begin{align*}
			v_{rel}^*
			= 
			\min_{\substack{\x\in\R^{Sn},\\ \x _i\in\R^n, \ \phi_i\in\R}}
			\lrbr{
				\x\T\Ab\x +2 \a\T\x + \sum_{i=1}^{S}\phi_i
				\colon 	
				\begin{array}{l}									
					\x\T\x\leq r,\
					\x = 
					\begin{bmatrix}
						\x _1\T,&\dots,&\x _S\T
					\end{bmatrix}\T,\\
					-\ga-2\x_i\T\w \leq \phi_i, \quad \forall (\ga,\w)\in\DD_i, \ i \in\irg{1}{S}
				\end{array}				
			},	
		\end{align*}
		which we will tackle by the Benders method. That is, we define the sets 
		\begin{align*}
			\DD^{f}_i\coloneqq& \lrbr{(\ga,\w)\in\R^{n+1}\colon -\ga-2\w\T\x_i\leq 0, \ \forall \x_i\in\dom(\hat{\phi}_i)}, & i\in\irg{1}{S},\\
			\DD^{o}_i \coloneqq& \lrbr{(\ga,\w)\in\R^{n+1}\colon -\ga-2\w\T\x_i\leq \phi_i(\x_i), \ \forall \x_i \in\R^n},& i \in\irg{1}{S},
		\end{align*}
		which are the sets of feasibility and optimality cuts respectively, that will be reconstructed iteratively via finite approximations $\DD^f_{i,k}\subseteq \DD^f_{i},\ \DD^o_{i,k}\subseteq \DD^o_{i}$ in the $k$-th iteration of the Benders procedure.   
		
		Let $v^*_{rel,k}\leq v^*_{rel}$ be defined via the master problem after the $k$-th iteration:  
		\begin{align*}
			v_{rel,k}^*
			= 
			\min_{\substack{\x\in\R^{Sn},\\ \x _i\in\R^n, \ \varphi_i\in\R}}
			\lrbr{
				\x\T\Ab\x +2 \a\T\x + \sum_{i=1}^{S}\varphi_i
				\colon 	
				\begin{array}{l}									
					\x\T\x\leq r,\
					\x = 
					\begin{bmatrix}
						\x _1\T,&\dots,&\x _S\T
					\end{bmatrix}\T,\\
					-\ga-2\x_i\T\w \leq \varphi_i, \quad \forall (\ga,\w)\in\DD^o_{i,k}, \ i \in\irg{1}{S},\\
					-\ga-2\x_i\T\w \leq 0,\  \quad \forall (\ga,\w)\in\DD^f_{i,k}, \ i \in\irg{1}{S}
				\end{array}				
			},	
		\end{align*}	
		with $\x^k,\x_1^k,\dots,\x_S^k,\varphi_1^k,\dots,\varphi_S^k$ as its optimizers, which exist as long as we can identify a single optimality cut in the first iteration since the set of feasible value for $\x$ is compact. We can solve the dual representations $\phi_{i}(\x_i^k), \ i \in\irg{1}{S}$ to obtain $(\ga^k_i,\w^k_i)$ for augmenting $\DD^o_{i,k+1}\coloneqq \DD^o_{i,k}\cup\lrbr{(\ga^k_i,\w^k_i)}$ whenever the respective problem is bounded. Otherwise, we solve 
		\begin{align}\label{eqn:ImprovingRay}
			\sup_{\ga\in\R,\w\in\R^n}
			\lrbr{
				-\ga-2\x_i\T\w \colon 
				\begin{bmatrix}
					\ga & \w\T\\ \w & \Ob
				\end{bmatrix}\in\COP(\KK_i),\
				\|[\ga,\w\T]\T \|_2\leq 1 
			},
		\end{align}
		in order to identify an improving ray to likewise augment $\DD^f_{i,k+1}$. The fact that solutions to \eqref{eqn:ImprovingRay} allow us to construct feasibility cuts stems from 
		\cite[Proposition 1. and Proposition 2.]{saito_benders_2007}
		which are applicable since by \cref{asm:CompactFs} a Slater point in the dual representation of $\hat{\phi}_i$ exists independently of the value of $\Ab_i$ (in fact $\e_1\e_1\T\in\interior\COP(\KK_1)$ since $\e_1\in\interior\KK_i^*$ so that $\ga$ can always be scaled as to attain a Slater point regardless of the value of $\Ab_i$). In any case, we can shrink the feasible set of $v^*_{rel,k}$ until we find that $\hat{\phi}_i(\x_i^{k}) = \varphi_i^{k}, \ i \in\irg{1}{S}$ in which case no new cuts can be generated and the procedure stops.

		\begin{algorithm}[H]
			\SetAlgoLined
			\KwResult{$v^*_{rel}$ }
			set $k=1$,\;
			construct initial solution $(\x^1)\T \coloneqq [(\x_1^1)\T,\dots,(\x_S^1)\T]\T$,\;
			set $\varphi^1_i,\ i \in\irg{1}{S}$ equal to $-\infty$,\;
			\Repeat{$\hat{\phi}_i(\x^k)=\varphi_i^k, \ i\in\irg{1}{S}$.}{
				solve $\hat{\phi}_i(\x^k), \ i \in\irg{1}{S}$,\;
				\For{$i\in\irg{1}{S} \colon \hat{\phi}_i(\x^k)> \varphi_i^{k}$}
				{
					obtain optimal $(\ga,\w)$ (or the respective improving ray) from the dual representation of $\hat{\phi}_i(\x_i^k)$ and update $\DD^o_{i,k+1} = \DD^o_{i,k}\cup\lrbr{(\ga,\w)}$ (or $\DD^f_{i,k+1}$ respectively),
				}
				set $k = k+1$,\;
				solve $v^*_{rel,k}$ to obtain $\x^k$, 
			}
			\caption{\footnotesize{Solving the convex relaxation via Bender's Cuts }}
		\end{algorithm}

		\begin{rem}
			It is possible to initialize the algorithm with $\x_1$ obtained from $v^*_{rel}$ where, for all $i\in\irg 1S$, the sets $\DD^o_{i,1}=\DD^f_{i,1}=\emptyset$ are empty and $\varphi_i$ are subject to a trivial lower bound (or absent altogether). We found the method to perform better when we calculated initial solutions $\x_i^1\in\FF_i, \ i \in\irg{1}{S}$ via {\tt Gurobi} and then concatenate these vectors to $\x^1$. Solving this feasibility problem is not harder than the QCQP subproblems that we have to solve anyway when solving the copositive dual (see next section), so this effort seemed reasonable to us.  
		\end{rem}

		\subsubsection{Solving the copositive subproblem}
		
		To solve the copositive dual, we will adapt a cutting plane strategy recently studied in \cite{gabl_solving_2023}, for the case where copositivity was meant in the classic sense, i.e., with respect to the nonnegative orthant. We will generalize their approach here and discuss the algorithm for a fixed $i\in\irg{1}{S}$ since the procedure is the same across all instances. 
		We start by constructing a polyhedral outer approximation $\CC_1\subseteq \COP(\KK_i)$ which is updated in every iteration so that in the $k$-th iteration we solve
		\begin{align*}
			\theta(\x_i,\CC_k)\coloneqq\sup_{\ga,\w,\Sb_k}
			\lrbr{
				-\ga-2\x_i\T\w \colon 
				\begin{bmatrix}
					\ga & \w\T\\ \w & \Ab_i
				\end{bmatrix}\eqqcolon \Sb_k\in\CC_k	
			}. 	
		\end{align*}
		Then, a copositivity check either certifies $\Sb_k\in\COP(\KK_i)$ or gives a negative certificate $\z_k\in\KK_i$ for which $\z_k\T\Sb_k\z_k<0$ in which case the reverse constraint $\z_k\T\Sb_k\z_k\geq 0$ can be used as a cutting plane to update $\CC_{k+1}\coloneqq \CC_k\cap\lrbr{\Sb\in\SS^{n+1}\colon \z_k\T\Sb\z_k\geq 0 }$. The procedure is repeated until set-copositivity is successfully certified and can be summarized as follows.  
		
		\begin{algorithm}[H]
			\SetAlgoLined
			\KwResult{$v^*$ }
			set $k=1$\;
			construct outer approximation $\CC_1\supseteq \COP(\KK_i)$, \;
			\Repeat{$\Sb_k\in\COP(\KK_i)$}{
				solve $\theta(\x_i,\CC_k)$ to obtain $\Sb_k$,\;
				check $\Sb_k\in\COP(\KK_i)$,\;
				\If{$\Sb_k\notin\COP(\KK_i)$}{
					obtain certificate $\z_k\in \KK_i$,\;
					set $\CC_{k+1}$ = $\CC_k\cap\lrbr{\Sb\in\SS^{n+1}\colon \z_k\T\Sb\z_k \geq 0}$, 
				}
			}
			\caption{\footnotesize{Solving set-copositive optimization problems}}
		\end{algorithm}
		
		To check set-copositivity it suffices to check whether 
		\begin{align*}
			\begin{bmatrix}
				1 \\ \z
			\end{bmatrix}\T
			\begin{bmatrix}
				\ga & \w\T \\ \w  & \Ab_i
			\end{bmatrix}
			\begin{bmatrix}
				1 \\ \z
			\end{bmatrix} = 
			\ga +2\w\T\z + \z\T\Ab_i\z \geq 0, \quad \forall \z\in\FF_i,
		\end{align*}
		since for $\z_k\in\KK_i$ we have $\z_k\T\Sb\z_k\geq 0$ if and only if $(\e_1\T\z_k)^{-2}\z_k\T\Sb\z_k\geq 0$ and $(\e_1\T\z_k)^{-1}\z_k\in\KK_i$ since $\e_1\T\z_k>0$ whenever $\z_k\in\KK\setminus\lrbr{\oo}$ by construction. To check whether the semi-infinite constraint holds we can  simply solve 
		\begin{align*}
			\min_{\z\in\R_+^n}
			\lrbr{
				\ga +2\w\T\z + \z\T\Ab_i\z 
				\colon 
				\ \z\T\Qb_{ij}\z = 1,\ j\in\irg{1}{m}, \ \z\T\z\leq r
			}.
		\end{align*}
		If it is nonnegative, set-copositivity is certified successfully. Otherwise the optimizer $\z^*$ can be used to update $\CC_k$ using $\z_{k+1}\coloneqq [1,(\z^*)\T]\T$. Also, note that for the sake of producing improving rays that populate the feasibility cuts, we can apply the same procedure to \eqref{eqn:ImprovingRay} in an analogous manner.  
		
		\begin{rem}
			The copositivity test still requires us to solve a nonconvex QCQP, in other words to solve the original QCQP with our proposed method we have to solve many QCQPs in the process. The benefit comes from the fact that the scale of the QCQPs we have to solve is much smaller. In our example, the size of the smaller QCQPs in the subproblems is $n$ while the original QCQP is of size $Sn$, which my be significant if $S$ is large enough. Thus, if the small-scale QCQPs can be managed well, our method allows us to leverage this capability for the sake of attacking the larger problem. To the best of our knowledge, this is the first framework that enables such an approach in this generality. 
		\end{rem}
		
		\begin{rem}
			
   Let $K$ denote the index of the last iteration of the cutting plane procedure. Since  $\z_k$  are feasible for the respective $\FF_i$ for all $k \in\irg{1}{K}$, the procedure above, as a byproduct, produces potentially feasible solutions for the original problem. In our experiments we took the last iterate $\z_K^i$ from each of the $S$ subproblems and concatenated them into a vector $\x_{ub} = [(\z_K^1)\T,\dots,(\z_K^S)\T]\T$, which in case $\x_{ub}\T\x_{ub}\leq r$ was used to calculate an upper bound on the original problem. As we will see later, this ad-hoc approach performed reasonably well. 
		\end{rem}
		
		\begin{rem}
			Finally, we would like to remark that solving the $S$ copositive duals may be parallelized, which potentially saves computation time. We did not implement such a parallelization, but in our experiments, we calculated what the computation time would have been if we merely accounted for the longest of the $S$ dual computations in every iteration, which gives an optimistic estimate of the computation time under parallelization.  
		\end{rem}

		\subsubsection{A stabilizing regularization}
		During our numerical experiments we noticed that even if it was the case that an iterate $\x_i^k$ was feasible for $\conv\FF_i = \dom\hat{\phi}_i$ by any reasonable numerical standard, the copositive dual was often still unbounded so that another feasibility cut would be generated, which eventually led the numerical instabilities and a failure to converge. To tackle this issue in an ad-hoc manner, we slightly modified the definition of $\hat{\phi}_i$, and consequently its dual representation, in the following way, involving a user-defined parameter $\eps>0$: 
		\begin{align*}
			\hat{\phi}_i(\x_i)
			=& 
			\min_{\Xb_i\in\SS^n,\v\in\R^n}
			\lrbr{
				\Ab_i\bullet\Xb_i
				\colon 
				(\x_i+\v,\Xb_i)\in\GG(\FF_i),\
				\|\v\|_2 \leq \eps 
			}\\
			=& 
			\sup_{\ga\in\R,\w\in\R^n}
			\lrbr{
				-\ga-2\x_i\T\w + \eps\|\w\|_2 \colon 
				\begin{bmatrix}
					\ga & \w\T\\ \w & \Ab_i
				\end{bmatrix}\in\COP(\KK_i)	
			}.
		\end{align*}
 
		The advantage of this formulation is that $\hat{\phi}_i(\x_i^k)$ is finite even if $\x_i^k$ is only within an $\eps$-distance of the effective domain of $\hat{\phi}_i$.  Note, that the so-defined version of $\hat{\phi}_i$ is an underestimator of the original one, and that any feasible solution of the newly regularized dual is feasible for the original dual. Thus, by solving this dual we are still able to produce valid optimality cuts. In our experiments, we found that $\eps = 0.05$ was enough to eliminate the undesired behavior described above, while the iterates $\x^k$ were eventually close to feasibility not only for $\conv \FF_i$ but actually for $\FF_i$. As stated before, this was an ad-hoc solution and we hope we can analyze the effect of this regularization more closely in future research.

		\subsection{Numerical Evidence}
		
		In order to test the above scheme we implemented it in {\tt MATLAB 2016R} where we used {\tt Gurobi 9.1} as both an LP and QCQP solver and {\tt YALMIP} \cite{lofberg_yalmip_2004} as a modeling language. All computations were carried out on an {\tt Intel(R) Core(TM) i5-9300H} CPU (2.40GHz, 16 GB RAM).

		We generated instances of \eqref{eqn:TheModel} in the following manner. For constructing each of  $\Ab\in\SS_+^{Sn}, \Ab_i\in\SS^{n},\ i\in\irg{1}{S}$, we drew a random square matrix $\tilde{\Ab}$ of appropriate size and entries in $[-0.5,0.5]$ and set the respective matrices equal to $\tilde{\Ab}\tilde{\Ab}\T/4$. The entry of the vector $\a\in\R^{Sn}$ were also drawn randomly from $[-0.5,0.5]$. For the matrices    
		$\Qb_{ij} \in\SS_+^n,\ j\in\irg{1}{m},\ i\in\irg{1}{S}$ we only used diagonal matrices where the entries were randomly drawn from the interval $[2,3]$. To guarantee that the sets $\FF_i$ were nonempty we generated these matrices one by one, checking whether introducing the respective constraint would render the problem infeasible by consulting {\tt Gurobi}. If that was the case the matrix would be discarded and a new one generated and the process repeated until the desired number of constraints was reached. After that, we calculated a feasible solution $\x_{feas}$ and set $r = 2\x_{feas}\T\x_{feas}$. By following this procedure, we made sure that all the instances we constructed were in fact feasible. In this manner, we generated four instances with $S=30,\ n = 10$, and $m\in\irg{5}{8}$.
			
		\begin{table}[htbp]	\label{results}
			\centering{	
				\begin{tabular}{c||rrr|rrr||rr}
					\multicolumn{1}{l||}{} & \multicolumn{ 3}{c|}{\textbf{{\tt Gurobi}} } & \multicolumn{ 3}{c||}{\textbf{Benders} } & \multicolumn{2}{c}{\textbf{Time}}  \\ 
					\multicolumn{1}{l||}{$m$} & \multicolumn{1}{c}{UB} & \multicolumn{1}{c}{LB} & \multicolumn{1}{c|}{Gap} & \multicolumn{1}{c}{UB} & \multicolumn{1}{c}{LB} & \multicolumn{1}{c||}{Gap} & \multicolumn{1}{c}{Total} & \multicolumn{1}{c}{Parallel } \\ \hline
					5 & 133.79 & -359.74 & 368.88 & 105.72 & 78.88 & 25.39 & 9750.3 & 1487.7\\ 
					6 & 143.46 & -299.87 & 309.03 & 129.96 & 107.41 & 17.35 & 7355.5& 1452.8 \\ 
					7 & 150.72 & -318.94 & 311.61 & 137.31 & 130.65 & 4.85 & 4893.8 & 1221.8\\ 
					8 & - & - & - & 130.61 & 127.37 & 2.48 & (5) 1954.4 & 669.5\\ 
				\end{tabular}
								\caption{Results from the experiments.}
			}
		\end{table}
		
		Using the method described in this section, we produced a lower and an upper bound for every instance, which allowed us to also calculate an optimality gap (relative to the upper bound). The time needed for our algorithm to finish was then used as a time limit for {\tt Gurobi} to also attempt to solve the instance. The results are summarized in Table~1. The most important feature of the results is that the lower bound we obtained from the Benders approach was in all instances far better than the one produced by {\tt Gurobi} on its own in the same time frame (see the columns headed by "LB"). This shows that our approach may be beneficial for global optimization techniques which significantly depend on good quality of lower bounds. Interestingly, also the upper bounds (in the columns headed by "UB") are better than what {\tt Gurobi} generated on its own. The last instance was erroneously deemed infeasible by {\tt Gurobi} after 5 seconds (indicated in the table by the number in brackets). 
		
		In the final two columns, we report the total running time of the Benders method as well as the time it would have taken if solving the $S$ duals in every iteration were fully parallelized, based on our optimistic estimate. We see that our method performs better with increasing $m$ both in terms of running time as well as solution quality. The reasons for this, we conjecture, are twofold: firstly the copositivity checks run faster since {\tt Gurobi} benefits from additional constraints, which strengthen the relaxations that it uses internally. Secondly, the smaller $\FF_i$ across $i \in\irg{1}{S}$, the smaller their convex hull, so that the feasibility cuts generated are potentially much stronger, the further the feasible set is restricted. This is of course beneficial for the convex master problem. 
  }

	\section*{Conclusion and future research}
	In this article, we derived affine, and convex/nonconvex quadratic underestimators of optimal value functions of QCQPs. The results generalize existing results in copositive optimization theory and complement existing results from convex duality theory since we can have more than just affine underestimators, even in the nonconvex case. \blue{We also gave a preliminary example of how to exploit these underestimators in a generalized Benders approach, which was designed somewhat ad-hoc but performed reasonably well. }
	In future research, we will explore techniques on how to efficiently separate useful (i.e.\ convex or manageable nonconvex) quadratic underestimators and investigate their utility in global optimization frameworks. 
	
	\section*{Acknowledgements.}
	The research of M.G. is financially supported by the FWF project ESP 486-N. Both authors are indebted to Ivana Ljubic (ESSEC Paris) for inspiring discussions and valuable suggestions. 
	\blue{We would also like to thank the diligent reviewers who in addition to their careful considerations also suggested adding numerical experiments which eventually became an important part of this study.} 
	
	\bibliography{literature}
	\bibliographystyle{abbrv}

\end{document}